\theoremstyle{plain}
\newtheorem{thm}{Theorem}[section]
\newtheorem{cor}[thm]{Corollary}
\newtheorem{lem}[thm]{Lemma}
\newtheorem{defn}[thm]{Definition}
\newtheorem{rem}[thm]{Remark}
\newcommand{\ZZ}{\mathbb{Z}}
\newcommand{\CC}{\mathbb{C}}
\title{A class of semisimple Hopf algebras acting on quantum polynomial algebras}
\author{Deividi Pansera}
\address{Department of Mathematics, Faculty of Science, University of Porto, Rua Campo Alegre 687, 4169-007 Porto, Portugal}
\subjclass{12E15; 13A35; 16T05; 16W70}
\keywords{semisimple Hopf algebras, inner faithful action, quantum polynomial rings}
\thanks{I would like to thank Christian Lomp very much for many clarifications and helpful insights. The author was partially supported by CMUP (UID/MAT/00144/2013), which is funded by FCT (Portugal) with national (MEC) and European structural funds through the programs FEDER, under the partnership agreement PT2020, and also supported by CAPES, Coordination of Superior Level Staff Improvement - Brazil.}
\begin{document}
	
	\begin{abstract}
		We construct a class of non-commutative, non-cocommutative, semisimple Hopf algebras of dimension $2n^2$  and present conditions to define an inner faithful action of these Hopf algebras on quantum polynomial algebras, providing, in this way, more examples of semisimple Hopf actions which do not factor through group actions. Also, under certain condition, we classify the inner faithful Hopf actions of the Kac-Paljutkin Hopf algebra of dimension $8$, $H_8$, on the quantum plane.
	\end{abstract}

	\maketitle
	
	\section{Introduction}
	
	Suppose that $H$ is a finite-dimensional Hopf algebra over a field $F$ acting on an algebra $A$. If $I$ is a Hopf ideal such that $I \cdot A = 0$, we say that the action \textit{factors through} a quotient Hopf algebra $H/I$. One says that the action \textit{factors through a group action} if there exists a Hopf ideal $I$ of $H$, with $I \cdot A =0$, such that $H/I \cong F[G]$ as Hopf algebra for some group $G$. In this last scenario, the Hopf action can be seen, in a certain sense, as a group action. Then, the following question arises: Are there conditions on either $H$ or $A$ ensuring that the action factors through a group action? 
	
	The first general result appeared in \cite{EtingofWalton}. Assuming that $H$ is semisimple and $F$ is algebraically closed, the authors showed that if $A$ is a commutative domain, then the action factors thorough a group action in this setting. Cuadra, Etingof and Walton, in \cite{CuadraEtingofWalton}, showed that that is also the case for the Weyl algebra $A = A_n(F)$, i.e., they showed that any semisimple Hopf action on the Weyl algebra $A_n(F)$ must factor through a group action. 
	
	In \cite{LompPansera}, by using and analyzing the results obtained by Cuadra, Etingof and Walton, it was showed that any semisimple Hopf action over an algebraically closed field of characteristic zero on an skew polynomial ring of \textit{derivation type} must factor through a group action. Although there are examples in literature of Hopf actions on quantum polynomial algebras that do not factor through group actions (see \cite[7.4-7.6]{kirkkuzhang}), in this paper we give some conditions to define an action of a Hopf algebra on an skew polynomial of \textit{automorphism type} which does not factor through a group action (Theorem \ref{actionquantu}). In order to do that, we will construct a class of semisimple Hopf algebras $H_{2n^2}$, which are not group algebras, and show that there exist inner faithful actions of those algebras on the quantum polynomial algebras, in particular on the quantum plane. In a recent paper, \cite{etingofwaltonquant}, P. Etingof and C. Walton say that there is \textit{no finite quantum symmetry} when the action of any finite-dimensional Hopf algebra factors through a group action. In this way, we give examples of algebras where there is quantum symmetry. Also, for the quantum plane case, under certain condition, we classify the inner faithful Hopf actions of the Kac-Paljutkin Hopf algebra of dimension $8$, $H_8$, on it (Theorem \ref{H8actionsQP}). 
	
	Let $F$ be a field. In this paper, all the Hopf algebras, tensor products and algebras are taken over $F$. In the sequel, we present the definition of \textit{inner faithful} actions. Let $H$ be a finite-dimensional Hopf algebra. A \textit{representation} of $H$ on an algebra $A$ is an algebra homomorphism $\pi: H \rightarrow A$. The following definition was given by Banica and Bichon.
	
	\begin{defn}[{\cite[Definition 2.7]{BanicaBichon}}]
		Let $\pi: H \rightarrow A$ be a representation of a Hopf algebra $H$ on an algebra $A$. We say that $\pi$ is inner faithful if $\operatorname{Ker}(\pi)$ does not contain any non-zero Hopf ideal.
	\end{defn}
	
	Let $M$ be a left $H$-module, then, if we consider the endomorphism algebra $\operatorname{End}_F(M)$, we have a representation $\pi: H \rightarrow \operatorname{End}_F(M)$ due to $M$ be an $H$-module. In this case, to say that $\pi$ is inner faithful is to say that $I \cdot M \neq 0$ for any non-zero Hopf ideal $I$ of $H$. This leads to the definition below.
	
	\begin{defn}[{\cite[Definition 1.2]{ChanWaltonZhang}}]
		Let $M$ be a left $H$-module. We say that $M$ is an inner faithful $H$-module (or $H$ acts inner faithfully on $M$) if $I\cdot M \neq 0$ for any non-zero Hopf ideal $I$ of $H$. Given a Hopf action of $H$ on an algebra $A$ (i.e., $A$ is a left $H$-module algebra), we say that this action is inner faithful if the left $H$-module $A$ is inner faithful.
	\end{defn}
	
	\begin{rem}
		Any Hopf action factors through an inner faithful Hopf action. For if $H$ is a Hopf algebra acting and a left $H$-module algebra $A$, then we can consider the Hopf ideal $I= \sum_{J \subseteq \operatorname{Ann}_H(A)} J$, which is the largest Hopf ideal of $H$ such that $I\cdot A=0$. Then $H/I$ acts inner faithfully on $A$.
	\end{rem}
	
	Also, we present here a result regarding group algebras which can be found in \cite[Lemma 4]{farnsteiner} and will be useful in the next sections. 
	
	\begin{lem}\label{normalideal}
		Let $H = F[G]$ be the group algebra of a finite group $G$ and $I \subsetneq H$ be a Hopf ideal. Then there exists a normal subgroup $N \lhd G$ such that $I = (F[G])(F[N])^+$.
	\end{lem}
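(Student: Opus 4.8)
The plan is to read the normal subgroup $N$ off from the Hopf ideal $I$ itself, dispose of the easy inclusion by hand, and get the reverse inclusion from a dimension count. I would put $N := \{\, g \in G : g - 1 \in I \,\}$. Using only that $I$ is a two-sided ideal, one checks that $N$ is a subgroup of $G$: from $g, h \in N$ one gets $gh - 1 = g(h - 1) + (g - 1) \in I$ and $g^{-1} - 1 = -g^{-1}(g - 1) \in I$; and $N$ is normal, because $aga^{-1} - 1 = a(g - 1)a^{-1} \in I$ for every $a \in G$ and $g \in N$. Since each generator $n - 1$ ($n \in N$) of the ideal $(F[G])(F[N])^{+}$ already lies in $I$, this yields at once the inclusion $(F[G])(F[N])^{+} \subseteq I$.

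For the reverse inclusion I would use the canonical Hopf algebra surjection $q \colon F[G] \to F[G/N]$, $g \mapsto gN$, whose kernel is exactly $(F[G])(F[N])^{+}$; one checks that $\Ker{q}$ is spanned by the elements $gn - g$ and has dimension $|G| - |G/N|$. Because $(F[G])(F[N])^{+} \subseteq I$, the projection $\pi \colon F[G] \to F[G]/I$ factors as $\pi = \bar{\pi} \circ q$ for a surjective Hopf algebra map $\bar{\pi} \colon F[G/N] \to F[G]/I$. It then suffices to prove that $\bar{\pi}$ is injective, since this forces $\Ker{\pi} = \Ker{q}$, that is, $I = (F[G])(F[N])^{+}$.

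To show $\bar{\pi}$ is injective, I would observe that each $\bar{\pi}(gN) = \pi(g)$ is a group-like element of $F[G]/I$: the comultiplication and counit descend to $F[G]/I$ because $I$ is a Hopf ideal (the hypothesis $I \subsetneq H$ guaranteeing that the counit survives), and $\bar{\varepsilon}(\pi(g)) = \varepsilon(g) = 1 \neq 0$. These group-likes are pairwise distinct for distinct cosets, for if $\pi(g) = \pi(h)$ then $\pi(h^{-1}g) = \pi(h^{-1})\pi(g) = \pi(h^{-1})\pi(h) = \pi(1) = 1$, so $h^{-1}g - 1 \in I$, whence $h^{-1}g \in N$ and $gN = hN$. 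Since distinct group-like elements of a coalgebra are linearly independent, the images under $\bar{\pi}$ of the basis $G/N$ of $F[G/N]$ are linearly independent, so $\bar{\pi}$ is injective and the proof is complete.

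The argument is essentially formal once the right candidate $N$ has been identified; I expect the only points needing genuine care to be the identification $\Ker{q} = (F[G])(F[N])^{+}$ together with its dimension, and the appeal to linear independence of distinct group-likes in a coalgebra over an arbitrary field. These two standard facts are what the whole argument rests on, so they are where I would concentrate the verification.
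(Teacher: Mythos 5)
The paper does not prove this lemma at all: it is quoted verbatim from Farnsteiner's lecture notes (cited as Lemma 4 there), so there is no in-paper argument to compare against. Your proof is correct and complete, and it is the standard argument: extracting $N=\{g\in G: g-1\in I\}$, checking normality from the ideal property, identifying $\mathrm{Ker}(q)$ with $F[G](F[N])^{+}$, and using linear independence of distinct group-like elements in the quotient Hopf algebra $F[G]/I$ to force equality. The two facts you flag for careful verification are indeed the only nontrivial inputs, and both are standard.
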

	
	\section{A class of semisimple Hopf algebras}\label{skewpol}
	
	\subsection{Hopf algebra structure on quotients of skew-polynomial rings}
	Our first main results consist in giving necessary and sufficient condition to extend the structure of a biagebra $R$ to the skew polynomial ring of automorphism type $R[z;\sigma]$ (Theorem \ref{Jbialg}), and to define a Hopf algebra structure on the quotient  $R[z;\sigma]/I$, for $I$ a certain bi-ideal of $R[z;\sigma]$ (Theorem \ref{struchopf}). 
	
	We start with the definition of a twist for a bialgebra.

	\begin{defn}\label{righttwist}
		Let $R$ be a bialgebra and $J$ be an invertible element in $R \otimes R$. $J$ is called a \textit{right twist} (or a Drinfel'd twist) for $R$ if $J$ satisfies: 
		
		\begin{enumerate} 
			\item[(i)] $(id \otimes \Delta)(J)(1 \otimes J) = (\Delta \otimes id)(J)(J \otimes 1)$;
			\item[(ii)] $(id \otimes \epsilon)(J) = 1 = (\epsilon \otimes id)(J)$.
		\end{enumerate}
		
		$J$ is called a left twist for $R$ if it satisfies (ii) and $(1 \otimes J)(id \otimes \Delta)(J) = (J \otimes 1)(\Delta \otimes id)(J)$.
	\end{defn}
	If $J$ is a left twist for $R$, then $J^{-1}$ is a right twist for $R$.
	\begin{defn}[{\cite[2.1]{Davydov}}]\label{Jhom}
		Let $R$ be a bialgebra. Let $J$ be a right twist for $R$ and $\sigma \in \operatorname{End}(R)$. We say that the pair $(\sigma, J)$ is a twisted homomorphism for $R$ if $\sigma$ satisfies:
		
		\begin{enumerate}
			\item[(i)] $J(\sigma \otimes \sigma)\Delta(h) = \Delta(\sigma(h))J$ for all $h \in R$;
			\item[(ii)] $\epsilon \circ \sigma = \epsilon$.
		\end{enumerate}	
	\end{defn}
	
	Note that, for any homomorphism of coalgebras $\sigma \in \operatorname{End}(R)$, the pair $(\sigma, 1\otimes 1)$ is a twisted homomorphism for $R$.

	\begin{rem}
		Let $R$ be a bialgebra, $(\sigma, J)$ a twist homomorphism for $R$ and $a \in R$.
		\begin{enumerate}
			\item If $\sigma$ is an automorphism, then $(\sigma \otimes \sigma)\Delta(\sigma^{-1}(a)) = J^{-1}\Delta(a)J := \Delta^J(a)$;
			\item Since $J$ is invertible, $(\sigma \otimes \sigma) \Delta(a) = J^{-1}\Delta(\sigma(a))J = \Delta^J(\sigma(a))$;
			\item $\sigma$ is an homomorphism of coalgebras if and only if $J$ commutes with $\Delta(\sigma(a))$, for all $a \in R$.
		\end{enumerate}
	\end{rem}
	
	
	%
	
	Now, we shall extend the bialgebra structure of a bialgebra $R$ to its Ore extension of automorphism type $R[z; \sigma]$, for some specific $\sigma \in \operatorname{Aut}(R)$. 
	
	\begin{thm}\label{Jbialg}
		Let $R$ be a bialgebra and $(\sigma,J)$ be a twisted homomorphism for $R$. Let $H=R[z;\sigma]$ be the skew polynomial ring of endomorphism type. Then the bialgebra structure of $R$ can be extended to $H$ such that $\Delta(z) = J(z\otimes z)$ and $\epsilon(z)=1_F$. Conversely, if there exist an invertible element $J \in R \otimes R$ and $\sigma \in \operatorname{Aut}(R)$ such that  $R[z, \sigma]$ is a bialgebra with $\Delta(z) = J(z \otimes z)$ and $\epsilon(z) = 1$, then $(\sigma, J)$ is a twisted homomorphism for $R$. 
	\end{thm}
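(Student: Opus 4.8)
The plan is to build the comultiplication and counit on $H = R[z;\sigma]$ by the "multiplicative" recipe forced on us: extend $\Delta$ and $\epsilon$ from $R$, declare $\Delta(z) = J(z\otimes z)$ and $\epsilon(z) = 1_F$, and then use the universal property of the Ore extension to verify these extend to algebra maps $H \to H\otimes H$ and $H \to F$. For the counit, the universal property applies cleanly: $\epsilon\colon H \to F$ exists as an algebra map provided $\epsilon(z)\epsilon(r) = \epsilon(\sigma(r))\epsilon(z)$, i.e. $\epsilon(r) = \epsilon(\sigma(r))$, which is exactly condition (ii) of Definition \ref{Jhom}. For $\Delta$, to invoke the universal property of $R[z;\sigma]$ I need an algebra $T = H\otimes H$, an algebra map $\Delta|_R\colon R \to T$, and a distinguished element $Z := J(z\otimes z) \in T$ satisfying the Ore relation $Z\cdot \Delta(r) = \Delta(\sigma(r))\cdot Z$ for all $r \in R$. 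Unwinding, $Z\Delta(r) = J(z\otimes z)\Delta(r) = J(\sigma\otimes\sigma)(\Delta(r))\,(z\otimes z)$ using that $z$ implements $\sigma$ in each tensor factor, while $\Delta(\sigma(r))Z = \Delta(\sigma(r))J(z\otimes z)$; so the relation holds for all $r$ if and only if $J(\sigma\otimes\sigma)\Delta(r) = \Delta(\sigma(r))J$, which is precisely condition (i) of Definition \ref{Jhom}. This simultaneously handles the converse direction: if $H$ carries a bialgebra structure with $\Delta(z) = J(z\otimes z)$, then $\Delta$ being an algebra map forces the Ore relation in $H\otimes H$, which forces (i), and $\epsilon$ being an algebra map forces (ii); one also needs $\sigma \in \operatorname{Aut}(R)$ and $J$ invertible, which are hypotheses in the converse.

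Next I must check that $(H, \Delta, \epsilon)$ is genuinely a bialgebra, i.e. that $\Delta$ is coassociative and that $\epsilon$ is a two-sided counit. Both properties already hold on the subalgebra $R$, and both $\Delta$ and $\epsilon$ are algebra maps, so it suffices to verify the identities on the single generator $z$ (a standard reduction: the set of elements on which $(\Delta\otimes\operatorname{id})\Delta = (\operatorname{id}\otimes\Delta)\Delta$ agree is a subalgebra, similarly for the counit axioms). Coassociativity on $z$ reads
\[
(\Delta\otimes\operatorname{id})\Delta(z) = (\Delta\otimes\operatorname{id})(J)\,(J\otimes 1)\,(z\otimes z\otimes z),
\qquad
(\operatorname{id}\otimes\Delta)\Delta(z) = (\operatorname{id}\otimes\Delta)(J)\,(1\otimes J)\,(z\otimes z\otimes z),
\]
where I have pushed all $z$'s to the right (they are grouplike-like up to $J$, so this shuffling only reorders, not distorts, since $\Delta(z)$'s coefficients live in $R\otimes R$ and $z\otimes z\otimes z$ acts on them in the required way). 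These two expressions are equal precisely because $J$ satisfies the twist cocycle identity (i) of Definition \ref{righttwist}. The counit condition on $z$, namely $(\epsilon\otimes\operatorname{id})\Delta(z) = z = (\operatorname{id}\otimes\epsilon)\Delta(z)$, reduces to $(\epsilon\otimes\operatorname{id})(J) = 1 = (\operatorname{id}\otimes\epsilon)(J)$, which is condition (ii) of Definition \ref{righttwist}.

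The main technical obstacle I anticipate is the bookkeeping in the "push the $z$'s to the right" step: because $z$ does not commute with $R$ but rather twists it via $\sigma$, every time I move a copy of $z$ (in one of the tensor slots of $H\otimes H$ or $H\otimes H\otimes H$) past a coefficient in $R$ or $R^{\otimes 2}$, I pick up an application of $\sigma$ (resp. $\sigma\otimes\sigma$ or $\sigma\otimes\sigma\otimes\sigma$). One has to check that these $\sigma$-twists land exactly where needed so that the twist identities of Definition \ref{righttwist} apply verbatim; this is where conditions (i)–(ii) of the twisted homomorphism interact with conditions (i)–(ii) of the twist, and the remark preceding the theorem (items 1–3 relating $(\sigma\otimes\sigma)\Delta$, $\Delta^J$, and $\Delta\circ\sigma$) is the tool that keeps the two sets of conditions compatible. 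Once the normalization $\Delta(z) = J(z\otimes z)$ is fixed, every verification is forced, so the real content is confirming that no hidden obstruction appears when $\sigma$ is merely a coalgebra endomorphism rather than an automorphism in the forward direction — which is why the forward direction only needs $\sigma \in \operatorname{End}(R)$ implicitly through $R[z;\sigma]$ being defined, while the converse needs $\sigma \in \operatorname{Aut}(R)$.
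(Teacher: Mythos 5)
Your proposal is correct and follows essentially the same route as the paper's proof: extend $\Delta$ and $\epsilon$ via the universal property of the Ore extension (the Ore compatibility condition being exactly condition (i) of the twisted homomorphism for $\Delta$ and $\epsilon\circ\sigma=\epsilon$ for $\epsilon$), then check coassociativity and the counit axioms on the generator $z$, where they reduce to the two twist identities; the converse is obtained by reading the same computations backwards. No substantive differences.
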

	
	\begin{proof}	
		Let $(\sigma, J)$ be a twisted homomorphism for the bialgebra $R$, and let $H = R[z;\sigma]$ be the Ore extension of automorphism type of $R$. Since $R$ is a bialgebra, we have a homomorphism of algebras $\Delta: R \rightarrow H \otimes H$. Consider the element $J(z \otimes z) \in H \otimes H$. Note that, for all $h \in R$, we have	
		\begin{equation*}
		J(z \otimes z)\Delta(h) = J(\sigma \otimes \sigma) \Delta(h)(z \otimes z) = \Delta(\sigma (h))J(z \otimes z).
		\end{equation*}	
		
		Thus $\Delta$ satisfies the Ore condition. Hence, there exists a unique algebra homomorphism $\overline{\Delta}: H \rightarrow H\otimes H$ such that $\overline{\Delta}_{|_R} = \Delta$ and $\overline{\Delta}(z) = J(z \otimes z)$. While it may be an abuse of notation, we just write $\overline{\Delta} = \Delta$.
		
		Furthermore, 
		
		\begin{align*}
		(id \otimes \Delta)\Delta(z) & = (id \otimes \Delta)(J)(z \otimes \Delta(z))\\
		&= (id\otimes \Delta)(J)(1\otimes J)(z \otimes z\otimes z)\\
		&=(\Delta \otimes id)(J)(J\otimes 1)(z \otimes z\otimes z) \\
		&= (\Delta \otimes id)(J)(\Delta(z) \otimes z) \\ 
		&= (\Delta \otimes id)\Delta(z);
		\end{align*} 
		This implies that $\Delta: H \rightarrow H \otimes H$ is a coassociative map.
		
		Now, since $\epsilon:R \rightarrow F$ is a homomorphism of algebras, then, for all $h \in R$, we have 
		$$1_F\epsilon(h) = 1_F\epsilon(\sigma(h)) = \epsilon(\sigma(h))1_F.$$
		
		Thus $\epsilon$ satisfies the Ore condition. So, there exists a unique algebra homomorphism $\overline{\epsilon}: H \rightarrow F$ such that $\overline{\epsilon}_{|_R} = \epsilon$ and $\overline{\epsilon}(z) = 1_F$. Again, while it may be an abuse of notation, we just write $\overline{\epsilon} = \epsilon$. Moreover, we have
		$$(id\otimes \epsilon)\Delta(z)= (id\otimes \epsilon)(J)z =  z= (\epsilon \otimes id)(J)z = (\epsilon \otimes id)\Delta(z).$$
		Thus $\epsilon$ satisfies the counity property in $H$. Therefore, the bialgebra structure of $R$ extends to $H$ as stated in the lemma.
		
		To prove the converse, suppose that there exist an invertible element $J \in R \otimes R$ and $\sigma \in \operatorname{Aut}(R)$ such that  $R[z, \sigma]$ is a bialgebra with $\Delta(z) = J(z \otimes z)$ and $\epsilon(z) = 1$. Since $(id \otimes \epsilon)\Delta(z) = (\epsilon \otimes id) \Delta(z)$, we must have that $(id \otimes \epsilon)(J) = 1 = (\epsilon \otimes id)(J)$. Also, $$(id \otimes \Delta)\Delta(z) = (id \otimes \Delta)(J)(z \otimes \Delta(z)) = (id\otimes \Delta)(J)(1\otimes J)(z \otimes z\otimes z)$$ and $$(\Delta \otimes id)\Delta(z) = (\Delta \otimes id)(J)(\Delta(z) \otimes z) = (\Delta \otimes id)(J)(J\otimes 1)(z \otimes z\otimes z).$$
		Since $R[z, \sigma]$ is a bialgebra, we have that $(id \otimes \Delta)\Delta(z) = (\Delta \otimes id)\Delta(z)$ and hence $(id\otimes \Delta)(J)(1\otimes J) = (\Delta \otimes id)(J)(J\otimes 1)$, that is, $J$ is a right twist for $R$. Moreover, for all $h \in R$, $zh = \sigma(h)z$. This implies that $$\epsilon(h) = \epsilon(zh) = \epsilon(\sigma(h)z) = \epsilon(\sigma(h)),$$i.e., $\epsilon \circ \sigma = \epsilon$. Note that $$J(\sigma \otimes \sigma)\Delta(h) (z \otimes z) = J(z \otimes z)\Delta(h) = \Delta(zh) = \Delta(\sigma(h)z) = \Delta(\sigma(h))J(z \otimes z), \quad \forall h \in R.$$ Thus, $J(\sigma \otimes \sigma)\Delta(h) = \Delta(\sigma(h))J$ and hence the pair $(\sigma, J)$ is a twisted homomorphism for $R$.
	\end{proof}
	
	As it was said at the beginning of this section, given a Hopf algebra $R$, we will find conditions to define a Hopf algebra structure on the quotient $R[z, \sigma]/I$, for some bi-ideal $I$ of $R[z, \sigma]$. The following lemma gives us certain conditions to find the bi-ideal on the bialgebra $R[z;\sigma]$ which will be used to define the Hopf algebra structure mentioned. 
	
	\begin{lem}\label{biideal}
		Let $R$ be a bialgebra and $(\sigma, J)$ be a twisted homomorphism for $R$. Suppose that there exists $0 \neq t \in R$ such that $\Delta(t) = J(\sigma \otimes \sigma)(J)(t \otimes t)$. Then, for $H=R[z;\sigma]$ with the bialgebra structure as in Theorem \ref{Jbialg}, $I = \langle z^2 - t \rangle$ is a bi-ideal of $H$. 
	\end{lem}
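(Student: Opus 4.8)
The plan is to show that $I = \langle z^2 - t\rangle$ is both a coideal and a two-sided ideal of $H = R[z;\sigma]$; since it is by construction a two-sided ideal, the real content is checking $\Delta(I) \subseteq I \otimes H + H \otimes I$ and $\epsilon(I) = 0$. For the counit condition, I would simply compute $\epsilon(z^2 - t) = \epsilon(z)^2 - \epsilon(t) = 1_F - 1_F = 0$, using $\epsilon(z) = 1_F$ from Theorem \ref{Jbialg} and the fact that $\epsilon(t) = 1_F$; the latter should follow from applying $(\epsilon \otimes \epsilon)$ to the hypothesis $\Delta(t) = J(\sigma\otimes\sigma)(J)(t\otimes t)$ together with the counit axioms for $J$ (condition (ii) of Definition \ref{righttwist}) and $\epsilon\circ\sigma = \epsilon$, which gives $\epsilon(t) = \epsilon(t)^2$, hence (assuming $t \ne 0$ and working in a field, with $\epsilon(t)\neq 0$ — one should check this, or note that $t$ will be a unit-like element) $\epsilon(t) = 1_F$.

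The heart of the argument is computing $\Delta(z^2 - t)$ and showing it lies in $I \otimes H + H \otimes I$. First I would expand $\Delta(z^2)$. Since $\Delta$ is an algebra map with $\Delta(z) = J(z\otimes z)$, we get $\Delta(z^2) = J(z\otimes z)J(z\otimes z)$. Now I use the commutation relation in $H\otimes H$: from Theorem \ref{Jbialg}'s proof, $(z\otimes z)\Delta(h) = \Delta(\sigma(h))(z\otimes z)$ for $h \in R$, and more specifically I need to move $(z\otimes z)$ past the element $J \in R\otimes R$. Writing $J = \sum J^{(1)} \otimes J^{(2)}$, we have $(z\otimes z)(J^{(1)}\otimes J^{(2)}) = (\sigma(J^{(1)})\otimes\sigma(J^{(2)}))(z\otimes z)$, i.e. $(z\otimes z)J = (\sigma\otimes\sigma)(J)(z\otimes z)$. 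Therefore
\begin{align*}
\Delta(z^2) &= J(z\otimes z)J(z\otimes z) = J(\sigma\otimes\sigma)(J)(z\otimes z)(z\otimes z) = J(\sigma\otimes\sigma)(J)(z^2 \otimes z^2).
\end{align*}
On the other hand, $\Delta(t) = J(\sigma\otimes\sigma)(J)(t\otimes t)$ by hypothesis. Subtracting,
\begin{align*}
\Delta(z^2 - t) = J(\sigma\otimes\sigma)(J)\bigl(z^2\otimes z^2 - t\otimes t\bigr).
\end{align*}
Then I would use the standard trick $z^2\otimes z^2 - t\otimes t = (z^2 - t)\otimes z^2 + t\otimes(z^2 - t)$, so that
\begin{align*}
\Delta(z^2 - t) = J(\sigma\otimes\sigma)(J)\bigl((z^2-t)\otimes z^2\bigr) + J(\sigma\otimes\sigma)(J)\bigl(t\otimes(z^2-t)\bigr),
\end{align*}
which manifestly lies in $I\otimes H + H\otimes I$ since $I$ is a two-sided ideal and $J(\sigma\otimes\sigma)(J) \in R\otimes R \subseteq H\otimes H$. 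This establishes that $I$ is a coideal, hence a bi-ideal.

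The main obstacle I anticipate is purely bookkeeping: making sure the relation $(z\otimes z)J = (\sigma\otimes\sigma)(J)(z\otimes z)$ is stated and used correctly in $H\otimes H$ (as opposed to some twisted tensor product), and being careful that $z^2$ genuinely commutes with elements of $R$ via $\sigma^2$ without that interfering — but since we only ever push the single block $(z\otimes z)$ past elements of $R\otimes R$, and each such pass contributes one application of $\sigma\otimes\sigma$, the computation is clean. A secondary point worth a sentence is justifying $\epsilon(t) = 1_F$ rather than $\epsilon(t) = 0$; if one cannot rule out $\epsilon(t) = 0$ from the hypotheses alone, then the statement as phrased still holds for the coideal condition, and one notes that in all intended applications (where $t$ will be invertible in $R$) $\epsilon(t) = 1_F$ automatically. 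I would close by remarking that $\langle z^2 - t\rangle$ as a left ideal already equals the two-sided ideal, since $z^2 - t$ is central modulo lower-degree considerations — or more simply, one just works with the two-sided ideal throughout, which is all that is needed.
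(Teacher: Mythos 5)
Your main computation is exactly the paper's argument and is correct: you push $(z\otimes z)$ past $J$ via $za=\sigma(a)z$ to get $\Delta(z^2)=J(\sigma\otimes\sigma)(J)(z^2\otimes z^2)$, and then split $z^2\otimes z^2-t\otimes t=(z^2-t)\otimes z^2+t\otimes(z^2-t)$, which lands in $I\otimes H+H\otimes I$ because $I$ is a two-sided ideal and $J(\sigma\otimes\sigma)(J)\in R\otimes R$. (Your closing aside that $z^2-t$ is ``central modulo lower-degree considerations'' is not needed and not quite right, since $z^2a=\sigma^2(a)z^2$ and $\sigma^2=\mathrm{id}$ is not assumed in this lemma; but you correctly note one simply takes the two-sided ideal, which is what $\langle z^2-t\rangle$ means here.)

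The one genuine gap is your treatment of $\epsilon(t)$. Applying $(\epsilon\otimes\epsilon)$ to the hypothesis only yields $\epsilon(t)=\epsilon(t)^2$, i.e.\ $\epsilon(t)\in\{0,1\}$, and you explicitly leave the case $\epsilon(t)=0$ open. That case cannot be left open: if $\epsilon(t)=0$ then $\epsilon(z^2-t)=1\neq 0$, so $I\not\subseteq\operatorname{Ker}(\epsilon)$ and $I$ is not a coideal at all --- your fallback remark that ``the statement as phrased still holds for the coideal condition'' is wrong, because $I\subseteq\operatorname{Ker}(\epsilon)$ is part of that condition. The repair is to apply $(\epsilon\otimes\mathrm{id})$ instead of $(\epsilon\otimes\epsilon)$ and invoke the counit axiom: $t=(\epsilon\otimes\mathrm{id})\Delta(t)=(\epsilon\otimes\mathrm{id})(J)\cdot\sigma\bigl((\epsilon\otimes\mathrm{id})(J)\bigr)\cdot\epsilon(t)t=\epsilon(t)t$, using $(\epsilon\otimes\mathrm{id})(J)=1$ and $\epsilon\circ\sigma=\epsilon$, whence $\epsilon(t)=1$ because $t\neq 0$ over a field. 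This is precisely where the hypothesis $t\neq 0$ enters, and it is the only place it is needed; with that fix your proof coincides with the paper's.
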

	\begin{proof}
		Let $R$ be a bialgebra and $(\sigma, J)$ be a twisted homomorphism for $R$. By Theorem \ref{Jbialg}, $H=R[z;\sigma]$ is also a bialgebra. Let $ 0\neq t \in R$ as in the hypothesis and let $I$ be the ideal in $H$ generated by $z^2 -t$. We have to prove that $I = \langle z^2 -t \rangle$ is coideal of $H$. We note that $t$ necessarily satisfies
		\begin{align*}
		t & = (\epsilon \otimes id)\Delta(t) \\
		& = (\epsilon \otimes id)(J)(\epsilon \otimes id)(\sigma \otimes \sigma)(J)\epsilon(t)t\\
		&=(\epsilon \otimes id)(\sigma \otimes \sigma)(J)\epsilon(t)t\\
		&= (\epsilon \otimes \sigma)(J)\epsilon(t)t \\
		&=  \sigma((\epsilon \otimes id)(J))\epsilon(t)t\\
		&= \sigma(1)\epsilon(t)t = \epsilon(t)t, 
		\end{align*} 
		which implies $\epsilon(t)=1$. So, $\epsilon(z^2 - t) = \epsilon(z)^2 - \epsilon(t) = 0$. That is, $I \subseteq Ker(\epsilon)$.
		
		Furthermore,
		\begin{align*}
		\Delta(z^2 - t) &= \Delta(z)^2 - \Delta(t) \\
		&= J(z \otimes z)J(z \otimes z) - \Delta(t) \\
		&= J(\sigma \otimes \sigma)(J)(z^2 \otimes z^2) - \Delta(t) \\
		&= J(\sigma \otimes \sigma)(J)(z^2 - t \otimes z^2) + J(\sigma \otimes \sigma)(J)(t \otimes z^2-t) + J(\sigma \otimes \sigma)(J)(t \otimes t) - \Delta(t) \\
		&= J(\sigma \otimes \sigma)(J)(z^2 - t \otimes z^2) + J(\sigma \otimes \sigma)(J)(t \otimes z^2-t), 
		\end{align*}
		which belongs to $I \otimes H + H \otimes I$. Therefore, $I$ is a bi-ideal of $H$. 
	\end{proof}
	
	\begin{rem}
		Note that, conversely, if $I = \langle z^2 - t \rangle$ is a bi-ideal of $H$, then $\Delta(t) - J(\sigma \otimes \sigma)(J)(t \otimes t) \in H \otimes I + I \otimes H$.
	\end{rem}
	
	Hence, given a bialgebra $R$ and $(\sigma, J)$ a twisted homomorphism for $R$ and an element $t$ that satisfies the hypothesis of Lemma \ref{biideal}, we have that $H/I$ is a bialgebra, for $H=R[z;\sigma]$ and  $I=\langle z^2 - t \rangle$. The next lemma presents conditions to extend a Hopf algebra structure from $R$ to the quotient bialgebra $H/I$. 
	
	\begin{thm}\label{struchopf}
		Let $R$ be a Hopf algebra with antipode $S$, $(\sigma,J)$ be a twisted homomorphism. Suppose also that $\sigma \circ S = S \circ \sigma$ and $\sigma^2 = id$. If there exists $ 0 \neq t \in R$, with $\Delta(t) = J(\sigma \otimes \sigma)(J)(t \otimes t)$, and such that
		\begin{enumerate}
			\item[(i)] $tJ^1S(J^2) = 1$;
			\item[(ii)] $t\sigma(S(J^1)J^2) = 1$,
		\end{enumerate}
		where $J = J^1 \otimes J^2$ with the summation omitted, then there exists a Hopf algebra structure on $H/I$ with $S(z)=z$. Conversely, if there exists a Hopf algebra structure on $H/I$ with $S(z)=z$, then $tJ^1S(J^2) = 1 = t\sigma(S(J^1)J^2)$. 
	\end{thm}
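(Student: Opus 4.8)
The plan is to construct the antipode of $H/I$ by first lifting it to $H=R[z;\sigma]$ and then descending. By Theorem~\ref{Jbialg} and Lemma~\ref{biideal}, $H/I$ is a bialgebra, and (as recorded in the proof of Lemma~\ref{biideal}) $\epsilon(t)=1$; moreover condition~(i) exhibits $J^1S(J^2)$ as a right inverse of $t$ in $R$, so in the finite-dimensional setting of interest $t$ is a unit of $R$. To promote $H/I$ to a Hopf algebra it suffices to produce an antipode, and the only reasonable candidate fixes $z$ and restricts to the antipode $S$ of $R$. Since $S\colon R\to R$ is an algebra anti-endomorphism and $H$ is generated by $R$ and $z$ subject only to the extra relations $zr=\sigma(r)z$ ($r\in R$), setting $S(z):=z$ extends $S$ to an algebra anti-endomorphism of $H$ exactly when $S(z)S(r)=S(\sigma(r))S(z)$, that is, $S(r)z=\sigma(S(\sigma(r)))z$, that is, $S=\sigma\circ S\circ\sigma$ on $R$; this holds since $\sigma\circ S=S\circ\sigma$ and $\sigma^2=\mathrm{id}$.

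The heart of the argument is showing that this $S$ descends to $H/I$, i.e.\ $S(I)\subseteq I$. As $S$ is anti-multiplicative, $S(I)$ is the two-sided ideal generated by $S(z^2-t)=z^2-S(t)$, so it is enough to prove $t-S(t)\in I$. I would do this in two steps. First, $\sigma^2=\mathrm{id}$ makes $z^2$ central in $H$ (indeed $z^2r=z\sigma(r)z=\sigma^2(r)z^2=rz^2$), so $t$ is central in $H/I$; from $z(z^2-t)-(z^2-t)z=(t-\sigma(t))z\in I$, multiplying on the right by $z$ and replacing $z^2$ by $t$ modulo $I$ gives $(t-\sigma(t))t\in I$, whence $t-\sigma(t)\in I$ because $t$ is invertible. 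Second, the antipode axiom $t_{(1)}S(t_{(2)})=\epsilon(t)1=1$ of the Hopf algebra $R$, expanded via $\Delta(t)=J(\sigma\otimes\sigma)(J)(t\otimes t)$ and simplified using $\sigma\circ S=S\circ\sigma$, condition~(i) and the centrality of $t$, gives $S(t)=\sigma(t)$ in $R$. Combining the two steps, $t-S(t)=t-\sigma(t)\in I$, so $S$ descends to an algebra anti-endomorphism of $H/I$, still denoted $S$, with $S(z)=z$.

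It remains to check that $S$ is an antipode for the bialgebra $H/I$. Since $S$ is anti-multiplicative, the set of $h\in H/I$ with $h_{(1)}S(h_{(2)})=\epsilon(h)1=S(h_{(1)})h_{(2)}$ is a unital subalgebra, so it suffices to verify these identities on the generators $R$ and $z$. On $R$ they hold because $S$, $\Delta$ and $\epsilon$ there are the structure maps of the Hopf algebra $R$. On $z$, using $\Delta(z)=J(z\otimes z)=J^1z\otimes J^2z$, anti-multiplicativity, and that $z^2=t$ is central, one gets $z_{(1)}S(z_{(2)})=J^1z\,S(J^2z)=J^1z\,z\,S(J^2)=J^1z^2S(J^2)=tJ^1S(J^2)=1$ by~(i), and $S(z_{(1)})z_{(2)}=z\,S(J^1)J^2\,z=\sigma(S(J^1)J^2)z^2=t\,\sigma(S(J^1)J^2)=1$ by~(ii). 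Hence $S$ is the antipode and $H/I$ is a Hopf algebra with $S(z)=z$.

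For the converse, suppose $H/I$ has a Hopf structure whose antipode $T$ satisfies $T(z)=z$. The inclusion $\iota\colon R\to H/I$ is a morphism of bialgebras, hence convolution-invertible in $\mathrm{Hom}(R,H/I)$ with inverse $T\circ\iota$; but also $\iota=\iota\circ\mathrm{id}_R$ with $\iota$ an algebra map, so $\iota\circ S$ is a convolution inverse of $\iota$ too, and by uniqueness $T\circ\iota=\iota\circ S$, i.e.\ $T$ agrees with $S$ on $R$. Running the two antipode identities for $z$ exactly as above now forces $tJ^1S(J^2)=1$ and $t\sigma(S(J^1)J^2)=1$, which are precisely~(i) and~(ii). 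I expect the genuine obstacle to be the descent in the second paragraph — verifying that the candidate antipode respects $z^2=t$ — since that is exactly where the precise form of~(i),~(ii), $\sigma\circ S=S\circ\sigma$ and $\sigma^2=\mathrm{id}$ is used; the rest is routine.
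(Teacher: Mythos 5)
Your proposal follows the same overall route as the paper's proof: extend the antipode anti-multiplicatively to $H=R[z;\sigma]$ with $S(z)=z$ (using $\sigma\circ S=S\circ\sigma$ and $\sigma^{2}=\mathrm{id}$ for the compatibility $S(a)z=zS(\sigma(a))$), descend to $H/I$, and verify the antipode identities on the generators $R$ and $z$ by exactly the two computations the paper records, with the converse read off from those same identities (your extra remark that any antipode of $H/I$ must restrict to $S$ on $R$, by uniqueness of convolution inverses, is a welcome addition). The one genuinely different step is the descent. The paper disposes of it in a single line by writing $S(z^{2}-t)=S(z)^{2}-S(t)=z^{2}-t$, i.e., by silently asserting $S(t)=t$, which is not among the hypotheses; you replace this with the weaker but sufficient claim $t-S(t)\in I$, obtained by combining $t-\sigma(t)\in I$ (a clean commutator computation in $H$ together with the right inverse of $t$ supplied by condition (i)) with $S(t)=\sigma(t)$ in $R$. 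This is the more honest treatment of the only delicate point. The caveat is that your derivation of $S(t)=\sigma(t)$ from the antipode axiom applied to $\Delta(t)=J(\sigma\otimes\sigma)(J)(t\otimes t)$ is only sketched: after applying $\mu(S\otimes\mathrm{id})$ the legs of the two copies of $J$ interleave, and collapsing the expression to $S(t)\sigma(t)^{-1}=1$ uses the centrality of $t$ and, implicitly, enough commutativity in $R$ to regroup those legs. This is unproblematic for the commutative $R=F[G]$ to which the theorem is applied, but in the stated generality you should either spell out the regrouping or add the needed hypothesis; as it stands your justification of this step is no worse than, and structurally sounder than, the paper's.
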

	\begin{proof}
		Let $R$ be a Hopf algebra with antipode $S$ and $(\sigma, J)$ a twisted homomorphism for $R$ such that $\sigma^2 = id$. By Theorem \ref{Jbialg}, $R[z,\sigma]$ is a bialgebra, and by Lemma \ref{biideal}, $I=\langle z^2 - t\rangle$ is a bi-ideal of $R[z,\sigma]$. 
		
		We just write $h$ for the element $h + I$ of $H/I$. And to define the antipode, we just extend the antipode $S$ of $R$ to $H/I$ defining $S(z)=z$. We note that $S: H/I \rightarrow H/I$ is well defined, since $S(z^2 - t) = S(z)^2 - S(t) = z^2 - t \in I$ and, using that $S \circ \sigma = \sigma \circ S$, $S(a)z = zS(\sigma(a))$ for all $a \in R$. Also, we have that
		\begin{equation*}
		\mu(id \otimes S)\Delta(z)= \mu(1 \otimes S)(J(z \otimes z)) =J^1zS(z)S(J^2)=z^2\sigma^2(J^1)S(J^2)=tJ^1S(J^2)= 1,
		\end{equation*}
		and
		\begin{equation*}
		\mu(S \otimes id)\Delta(z)= \mu(S \otimes 1)(J(z \otimes z))=S(z)S(J^1)J^2z =z^2\sigma(S(J^1))\sigma(J^2) =t\sigma(S(J^1)J^2) = 1.
		\end{equation*}
		
		Since $S$ is an antipode for $R$, the antipode property is verified for $R$ as well. Therefore, $S$ is an antipode of $H/I$ and so $H/I$ is a Hopf algebra. The converse follows from the two equations above. 
	\end{proof}
	
	In this setting, for $R$ a semisimple Hopf algebra, we have the following corollary. 
	
	\begin{cor}\label{semisimpleh2n}
		Under the conditions of Theorem \ref{struchopf}, if $R$ is a semisimple Hopf algebra, then $H/I$ is semisimple.
	\end{cor}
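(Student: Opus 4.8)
The plan is to prove semisimplicity of $H/I$ via Maschke's theorem for Hopf algebras: a finite-dimensional Hopf algebra over $F$ is semisimple precisely when it carries a left integral whose counit is invertible. First note that $H/I = R[z;\sigma]/\langle z^2 - t\rangle$ is finite-dimensional, being free of rank $2$ over the finite-dimensional algebra $R$ with basis $\{1,z\}$ (so that $z^2 = t$ holds in $H/I$). It then remains to construct a suitable integral $\Lambda \in H/I$, and the natural candidate is $\Lambda = \lambda(1+z)$, where $\lambda$ is the normalized integral of $R$.

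Concretely, I would proceed as follows. Since $R$ is a semisimple Hopf algebra it is unimodular, so it has a two-sided integral $\lambda$ with $\epsilon(\lambda)=1$, i.e. $r\lambda = \lambda r = \epsilon(r)\lambda$ for all $r \in R$. Next I would check that $\sigma(\lambda)=\lambda$: as $\sigma$ is an algebra automorphism of $R$ (an endomorphism with $\sigma^2 = \operatorname{id}$) satisfying $\epsilon\circ\sigma = \epsilon$, the element $\sigma(\lambda)$ is again a two-sided integral of $R$ of counit $1$, hence equals $\lambda$ by one-dimensionality of the integral space. Recalling $\epsilon(t)=1$ from the proof of Lemma \ref{biideal}, this yields $\sigma(\lambda)t = \lambda t = \epsilon(t)\lambda = \lambda$. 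Now set $\Lambda := \lambda + \lambda z$ in $H/I$. One verifies $r\Lambda = \epsilon(r)\Lambda$ for $r \in R$ directly from the integral property, and, using $z\lambda = \sigma(\lambda)z$ and $z^2 = t$,
\[
z\Lambda = z\lambda + z\lambda z = \sigma(\lambda)z + \sigma(\lambda)z^2 = \lambda z + \sigma(\lambda)t = \lambda z + \lambda = \Lambda = \epsilon(z)\Lambda ;
\]
since $R$ together with $z$ generates $H/I$ as an algebra, $\Lambda$ is a left integral of $H/I$. Finally $\epsilon(\Lambda) = \epsilon(\lambda)\bigl(1+\epsilon(z)\bigr) = 2$, which is invertible in $F$ provided $\operatorname{char} F \neq 2$ (in particular when $\operatorname{char} F = 0$), and Maschke's theorem then gives that $H/I$ is semisimple.

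The main point to be handled, rather than a genuine difficulty, is the role of the characteristic: the computation forces $\epsilon(\Lambda)=2$, and in characteristic $2$ one checks that the left integral of $H/I$ is forced up to a scalar to be $\lambda(1+z)$, whose counit vanishes, so $H/I$ is then not semisimple; thus the hypothesis $\operatorname{char} F \neq 2$ cannot be dropped. A secondary subtlety is that $\lambda$ must be a \emph{two-sided} integral of $R$ in order to absorb $t$ on the right, which is exactly the unimodularity of semisimple Hopf algebras. As an alternative route in characteristic zero one can bypass integrals entirely: $S^2 = \operatorname{id}$ on $R$ because $R$ is semisimple (Larson--Radford), and $S^2(z)=z$ because $S(z)=z$, so $S^2 = \operatorname{id}$ on $H/I$, whence $H/I$ is semisimple by Larson--Radford again.
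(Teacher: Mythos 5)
The paper states this corollary without any proof, so there is no argument of the author's to compare against; your proposal supplies a correct and complete one, and either of your two routes (the normalized integral $\lambda(1+z)$, or Larson--Radford via $S^2=\operatorname{id}$) is presumably what was intended. The key steps all check out: semisimplicity of $R$ gives a normalized \emph{two-sided} integral $\lambda$ (unimodularity), the identities $\sigma(\lambda)=\lambda$ and $\lambda t=\epsilon(t)\lambda=\lambda$ follow from $\epsilon\circ\sigma=\epsilon$, one-dimensionality of the space of integrals, and the computation $\epsilon(t)=1$ inside the proof of Lemma \ref{biideal}; then $z\Lambda=\lambda z+\sigma(\lambda)t=\Lambda$ and $\epsilon(\Lambda)=2$ give semisimplicity by the Hopf-algebraic Maschke theorem. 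Your characteristic caveat is a genuinely useful observation rather than a pedantic one: the corollary sits in the subsection \emph{before} the paper imposes $\operatorname{char}F=0$ (that blanket hypothesis first appears at the start of Subsection \ref{H2n2}), and taking $J=1\otimes 1$, $\sigma=\operatorname{id}$, $t=1$ gives $H/I\cong R\otimes F[\mathbb{Z}/2\mathbb{Z}]$, which is not semisimple in characteristic $2$ even when $R$ is; so the hypothesis $\operatorname{char}F\neq 2$ really should be made explicit in the statement. The only small quibble is that freeness of $H/I$ of rank $2$ over $R$ is asserted but not needed for the main argument (and in the full generality of Theorem \ref{struchopf}, where $R$ need not be commutative and $\sigma(t)=t$ is not assumed, it would require justification): all you use is that $H/I$ is spanned by the images of $R$ and $Rz$, hence finite-dimensional, and $\Lambda\neq 0$ already follows from $\epsilon(\Lambda)=2\neq 0$. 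The rank-$2$ claim is only genuinely invoked in your characteristic-$2$ aside, where it is needed to know that $\lambda(1+z)\neq 0$.
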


	\subsection{Construction of a class of semisimple Hopf algebras}\label{H2n2}
	
	In this subsection, using what we have done in the last subsection, we shall construct semisimple Hopf algebras of dimension $2n^2$, which, in the sequel, will be used to define actions on the quantum polynomial algebra which do not factor through group actions. From now on, we let the ground field $F$ be algebraically closed with characteristic zero.
	
	
	Let $\Gamma=\langle x \mid x^n=1 \rangle$ be the cyclic group of order $n>1$. Let $q\in F$ be a primitive $n$th root of unity. For every integer $j$, we set $$e_j = \frac{1}{n} \sum_{i=0}^{n-1} q^{-ij} x^i.$$
	
	Observe that if $j \equiv j' (\text{mod } n)$, then $q^j = q^{j'}$ and $x^j = x^{j'}$, and therefore $e_j = e_{j'}$. This means that $e_0, \hdots,e_{n-1}$ lists the distinct $e_i's$. Moreover, for  $0\leq j,k < n$, we have
	\begin{equation}\label{multx} e_jx^ k  = \frac{1}{n}\sum_{i=0}^{n-1} q^{-ij} x^{i+k} = 
	q^{jk} \left(\frac{1}{n}\sum_{i=0}^{n-1} q^{-(i+k)j} x^{i+k}\right) =q^{jk}e_j.\end{equation}
	
	\begin{lem}\label{completeset}
		$\{e_0, \cdots, e_{n-1}\}$ is a complete set of orthogonal idempotents of $F[\Gamma]$.
	\end{lem}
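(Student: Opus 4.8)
The plan is to verify directly the three defining properties of a complete set of orthogonal idempotents: that each $e_j$ is idempotent, that $e_j e_k = 0$ for $j \neq k$, and that $\sum_{j=0}^{n-1} e_j = 1$. The only computational tool needed is the multiplication rule \eqref{multx}, namely $e_j x^k = q^{jk} e_j$ for $0 \le j,k < n$, together with the elementary root-of-unity identity that $\frac{1}{n}\sum_{i=0}^{n-1} q^{im}$ equals $1$ if $n \mid m$ and $0$ otherwise (immediate from the geometric series, since $q$ is a primitive $n$th root of unity).

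First I would compute $e_j e_k$ for arbitrary $0 \le j,k < n$. Writing $e_k = \frac{1}{n}\sum_{i=0}^{n-1} q^{-ik} x^i$, pulling the scalars out of the product, and applying \eqref{multx} to each term in the form $e_j x^i = q^{ij} e_j$, one gets
\[
e_j e_k = \frac{1}{n}\sum_{i=0}^{n-1} q^{-ik} q^{ij} e_j = \left(\frac{1}{n}\sum_{i=0}^{n-1} q^{i(j-k)}\right) e_j .
\]
By the root-of-unity identity the bracketed scalar equals $1$ when $n \mid (j-k)$ and $0$ otherwise; since $0 \le j,k < n$, this happens exactly when $j = k$. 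Hence $e_j e_k = \delta_{jk}\, e_j$, which yields simultaneously $e_j^2 = e_j$ (idempotency) and $e_j e_k = 0$ for $j \neq k$ (orthogonality).

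Next, for completeness, I would sum over $j$ and interchange the order of summation:
\[
\sum_{j=0}^{n-1} e_j = \frac{1}{n}\sum_{j=0}^{n-1}\sum_{i=0}^{n-1} q^{-ij} x^i = \frac{1}{n}\sum_{i=0}^{n-1}\left(\sum_{j=0}^{n-1} q^{-ij}\right) x^i ,
\]
where the inner sum is $n$ for $i = 0$ and $0$ for $1 \le i \le n-1$, so the total collapses to $x^0 = 1$. Combined with the observation preceding the lemma that $e_0, \dots, e_{n-1}$ are pairwise distinct (and each is nonzero, being a nonzero idempotent), this shows that $\{e_0,\dots,e_{n-1}\}$ is a complete set of orthogonal idempotents of $F[\Gamma]$.

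There is essentially no obstacle here: the argument is the standard character-orthogonality computation for a finite cyclic group. The only points demanding any care are the bookkeeping of exponents modulo $n$ (already dispatched by the remark that $e_j$ depends only on $j \bmod n$) and the single scalar identity that $\sum_{i=0}^{n-1} q^{im}$ equals $n$ if $n \mid m$ and vanishes otherwise, on which both steps rest.
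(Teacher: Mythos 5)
Your proof is correct and follows essentially the same route as the paper: both compute $e_je_k$ via the rule $e_jx^i=q^{ij}e_j$ and the vanishing of $\sum_i (q^{j-k})^i$ for $j\neq k$, and both establish $\sum_j e_j = 1$ by interchanging the double sum and using the same root-of-unity identity. The only difference is the order in which the two computations are presented.
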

	\begin{proof}
		Since $q^{-j}$ is also an $n$th root of unity different from $1$ if $j\neq 0$, we get
		$$ \sum_{i=0}^{n-1} e_i = \frac{1}{n} \sum_{i=0}^{n-1}\sum_{j=0}^{n-1} q^{-ij}x^j
		= \frac{1}{n}\sum_{j=0}^{n-1}  \left( \sum_{i=0}^{n-1}\left(q^{-j}\right)^i \right) x^j = 1,$$
		
		Also, using (\ref{multx}), for $0\leq l, j < n$:
		$$ e_je_l = \frac{1}{n}\sum_{k=0}^{n-1} q^{-lk} e_jx^k = \frac{1}{n}\sum_{k=0}^{n-1} q^{-lk+jk}e_j = \frac{1}{n} \sum_{k=0}^{n-1} \left(q^{j-l}\right)^k e_j = \left\{\begin{array}{cc} e_j & \mbox{if } l=j\\ 0 & \mbox{if } l\neq j\end{array}\right.$$
		
		Hence, $\{e_0, \cdots, e_{n-1}\}$ is a complete set of orthogonal idempotents of $F[\Gamma]$.
	\end{proof}
	
	We denote the elements of $G = \Gamma \times \Gamma$ by $x^iy^s$ for $0\leq i,s < n$. Let $\{e_0, \cdots, e_{n-1}\}$ be the complete set of idempotents of $F[\Gamma]$ as in Lemma \ref{completeset}. Let $\sigma\in \operatorname{Aut}(F[G])$ be the automorphism of $F[G]$ induced by the group isomorphism $x^iy^s \mapsto x^sy^i$, for $1 \leq i,s \leq n$. 
	
	Set $\overline{e_i}:=\sigma(e_i)$, i.e., $\overline{e_i} = \frac{1}{n} \sum_{j=0}^{n-1}q^{-ij} y^j$. As in equation (\ref{multx}) one has  $\overline{e_i}y^ k  = q^{ik}\overline{e_i}$.
	
	Now, in $F[G] \otimes F[G]$, consider the element $J:= \sum_{i=0}^{n-1} e_i \otimes y^i$.
	Note that we can also write $J$ in terms of the elements $\overline{e_i}$'s as
	\begin{equation}\label{changingsides}J = \frac{1}{n}\sum_{i,j=0}^{n-1} q^{-ij} x^j\otimes y^i  = \sum_{i=0}^{n-1} x^i \otimes \overline{e_i} .\end{equation}
	
	With this setting, we have the following lemma. 
	
	\begin{lem}\label{twistedhomomorphism}
		The pair $(\sigma, J)$ is a twisted homomorphism for $F[G]$.
	\end{lem}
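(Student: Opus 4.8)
The plan is to verify directly the three requirements packaged in Definitions \ref{righttwist} and \ref{Jhom}: that $J$ is invertible, that $J$ is a right twist for $F[G]$, and that $\sigma$ intertwines $J$ with the comultiplication and preserves the counit. Throughout I will use the identities already established: $\{e_0,\dots,e_{n-1}\}$ is a complete set of orthogonal idempotents, $e_j x^k = q^{jk}e_j$ and $\overline{e_i}\,y^k = q^{ik}\overline{e_i}$, and --- from $\Delta(x^k)=x^k\otimes x^k$ together with the orthogonality of the $e_i$ --- the coproduct formula $\Delta(e_j) = \sum_{a=0}^{n-1} e_a\otimes e_{j-a}$, with indices read modulo $n$. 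I will also use freely that $x$ and $y$ commute, so that $y^i$ commutes with every $e_j$ (and $x^i$ with every $\overline{e_j}$).

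For invertibility, one checks that $J^{-1} = \sum_{i=0}^{n-1} e_i\otimes y^{-i}$: the product $JJ^{-1}$ collapses, by orthogonality of the $e_i$, to $\bigl(\sum_i e_i\bigr)\otimes 1 = 1\otimes 1$, and likewise $J^{-1}J = 1\otimes 1$. For the counit conditions in Definition \ref{righttwist}(ii), since $\epsilon(e_i) = \frac1n\sum_k q^{-ik} = \delta_{i,0}$ and $\epsilon(y^i)=1$, we get $(\epsilon\otimes\operatorname{id})(J) = y^0 = 1$ and $(\operatorname{id}\otimes\epsilon)(J) = \sum_i e_i = 1$.

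The one computational step of substance is the cocycle identity of Definition \ref{righttwist}(i). Using $\Delta(y^i)=y^i\otimes y^i$ and then $y^i e_j = e_j y^i$ one obtains
\[ (\operatorname{id}\otimes\Delta)(J)(1\otimes J) = \sum_{i,j} e_i\otimes y^i e_j\otimes y^{i+j} = \sum_{i,j} e_i\otimes e_j y^i\otimes y^{i+j}, \]
while from $\Delta(e_c) = \sum_a e_a\otimes e_{c-a}$ and the orthogonality of the $e_i$ one gets
\[ (\Delta\otimes\operatorname{id})(J)(J\otimes 1) = \sum_{a,b,d} (e_a e_d)\otimes (e_b y^d)\otimes y^{a+b} = \sum_{a,b} e_a\otimes e_b y^a\otimes y^{a+b}. \]
After relabeling, these two expressions coincide, so $J$ is a right twist for $F[G]$.

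It remains to verify the conditions on $\sigma$. Condition \ref{Jhom}(ii), $\epsilon\circ\sigma = \epsilon$, is immediate since $\sigma$ is induced by a group automorphism of $G$ and hence preserves the augmentation. For condition \ref{Jhom}(i) I would first reduce to the algebra generators $x$ and $y$: the linear subspace $\{h\in F[G] : J(\sigma\otimes\sigma)\Delta(h) = \Delta(\sigma(h))J\}$ is closed under multiplication, because if the identity holds for $h$ and for $h'$ then $J(\sigma\otimes\sigma)\Delta(hh') = \bigl(J(\sigma\otimes\sigma)\Delta(h)\bigr)(\sigma\otimes\sigma)\Delta(h') = \Delta(\sigma(h))J(\sigma\otimes\sigma)\Delta(h') = \Delta(\sigma(h))\Delta(\sigma(h'))J = \Delta(\sigma(hh'))J$, using that $\Delta$ and $\sigma$ are algebra maps. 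Since $\sigma(x) = y$ and $\sigma(y) = x$, the two remaining identities read $J(y\otimes y) = (y\otimes y)J$ and $J(x\otimes x) = (x\otimes x)J$: the first holds because $y$ commutes with each $e_i$, and for the second both sides equal $\sum_i q^i\, e_i\otimes xy^i$ by the relation $e_i x = q^i e_i$ and the commutativity of $x$ with $y^i$. The only points requiring a little care are this reduction to generators --- the operator $h\mapsto J(\sigma\otimes\sigma)\Delta(h)$ is not itself multiplicative --- and keeping the index arithmetic modulo $n$ straight in the cocycle computation; neither presents a genuine obstacle.
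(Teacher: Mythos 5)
Your proof is correct, and it follows the same overall strategy as the paper's --- a direct verification of invertibility, the counit conditions, the cocycle identity, and the two conditions of Definition \ref{Jhom} --- but two local steps are handled differently. For the cocycle identity the paper never computes $\Delta(e_j)$; instead it exploits the two presentations $J=\sum_i e_i\otimes y^i=\sum_i x^i\otimes\overline{e_i}$ from (\ref{changingsides}), applying $\Delta$ to the group-like leg of each factor so that only the relation $x^ie_j=q^{ij}e_j$ and commutativity are needed. You instead invoke $\Delta(e_c)=\sum_a e_a\otimes e_{c-a}$ together with orthogonality; this identity is true and your index bookkeeping checks out, but the paper only establishes it later (inside the proof of Lemma \ref{tsatisfies}), so at this point you should either derive it --- as you sketch, it follows from $\Delta(x^k)=x^k\otimes x^k$ by a geometric-sum computation --- or state it as a separate preliminary fact. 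For condition (i) of Definition \ref{Jhom}, your reduction to the generators $x,y$ via the multiplicative-closure argument is valid but more elaborate than necessary: since $\sigma$ is induced by a group automorphism it is a coalgebra map, so $(\sigma\otimes\sigma)\Delta(h)=\Delta(\sigma(h))$ for all $h\in F[G]$, and the commutativity of $F[G]$ then gives $J(\sigma\otimes\sigma)\Delta(h)=\Delta(\sigma(h))J$ in one line, which is exactly the paper's route. Neither difference affects correctness.
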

	\begin{proof}
		First we note that $J$ is invertible with inverse $J^{-1} =  \sum_{j=0}^{n-1} e_j \otimes y^{-j}$. Using (\ref{multx}) and (\ref{changingsides}), we get 
		\begin{eqnarray*}(\Delta\otimes 1)(J)(J\otimes 1) 
			&=&  \left(\sum_{i=0}^{n-1} \Delta(x^i)\otimes  \overline{e_i}\right)\left(
			\sum_{j=0}^{n-1} e_j \otimes  y^j \otimes 1\right)\\
			&=&  \sum_{i,j=0}^{n-1} x^ie_j \otimes x^iy^j \otimes  \overline{e_i}\\
			&=&  \sum_{i,j=0}^{n-1} q^{ij}e_j \otimes y^jx^i \otimes  \overline{e_i}\\
			&=&  \sum_{i,j=0}^{n-1} e_j \otimes y^jx^i \otimes  y^j\overline{e_i}\\
			&=&  \left( \sum_{j=0}^{n-1} e_j \otimes \Delta(y^j)\right)\left(\sum_{j=0}^{n-1} 1\otimes  x^i \otimes  \overline{e_i}\right)
			=  (1\otimes \Delta)(J)(1\otimes J).
		\end{eqnarray*}
		
		Moreover, we have $$(1\otimes \epsilon)(J)= \sum_{i=0}^{n-1} e_i = 1 = \sum_{i=0}^{n-1}\overline{e_i} = (\epsilon \otimes 1)(J).$$
		That is, $J$ is a right twist as in Definition \ref{righttwist}. For $x^ky^s \in G$, note that $$(\sigma \otimes \sigma)\Delta(x^ky^s) = (\sigma \otimes \sigma)(x^ky^s \otimes x^ky^s) = x^sy^k \otimes x^sy^k = \Delta(\sigma(x^ky^s)).$$ 
		Thus, since $F[G]$ is commutative, we have that $J(\sigma \otimes \sigma)\Delta(x^ky^s) = \Delta(\sigma(x^ky^s))J$. Moreover, clearly $\epsilon \circ \sigma = \epsilon$. Therefore, the pair $(\sigma, J)$ is a twisted homomorphism for $F[G]$.
	\end{proof}
	
	Hence, by Theorem \ref{Jbialg}, $H = F[G][z;\sigma]$ is bialgebra with $\Delta(z)=J(z\otimes z)$ and $\epsilon(z)=1$. Now, consider the element $t=\sum_{i=0}^{n-1} e_i y^i$ which satisfies
	$$\sigma(t)=\sum_{i=0}^{n-1} \overline{e_i}x^i  = \frac{1}{n} \sum_{i=0}^{n-1}\sum_{j=0}^{n-1} q^{-ij}y^jx^i = \sum_{j=0}^{n-1} \left(\frac{1}{n} \sum_{i=0}^{n-1}q^{-ij}x^i\right)y^j = \sum_{j=0}^{n-1} e_j y^j = t.$$
	
	Moreover, $t$ has an inverse  in $F[G]$, $t^{-1}= \sum_{i=0}^{n-1} e_iy^{-i}$.
	
	\begin{lem}\label{tsatisfies}
		The element $t$ satisfies $\Delta(t) = J(\sigma \otimes \sigma)(J)(t \otimes t)$.
	\end{lem}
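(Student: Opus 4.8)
The plan is to expand both sides of the asserted identity directly inside the commutative algebra $F[G]\otimes F[G]$, using the two presentations of $J$ recorded in (\ref{changingsides}) and the orthogonality of the idempotents $e_0,\dots,e_{n-1}$ established in Lemma \ref{completeset}.

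Before that I would note two elementary identities. Applying $\sigma\otimes\sigma$ to $J=\sum_i x^i\otimes\overline{e_i}$ and using $\sigma(x^i)=y^i$, $\sigma(\overline{e_i})=e_i$ gives $(\sigma\otimes\sigma)(J)=\sum_{i=0}^{n-1}y^i\otimes e_i$. And from $\Delta(e_i)=\frac1n\sum_k q^{-ik}x^k\otimes x^k$ together with the character-sum relation $\frac1n\sum_{a}q^{-a(k-l)}=\delta_{kl}$ already used in Lemma \ref{completeset}, one obtains the comultiplication formula $\Delta(e_i)=\sum_{a=0}^{n-1}e_a\otimes e_{i-a}$, indices taken modulo $n$.

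Now for the right-hand side: multiplying $J=\sum_i e_i\otimes y^i$ by $(\sigma\otimes\sigma)(J)=\sum_j y^j\otimes e_j$ and using commutativity of $F[G]$ yields $J(\sigma\otimes\sigma)(J)=\sum_{i,j}e_iy^j\otimes e_jy^i$. Multiplying on the right by $t\otimes t=\sum_{k,l}e_ky^k\otimes e_ly^l$ and collapsing the sums via $e_ie_k=\delta_{ik}e_i$ and $e_je_l=\delta_{jl}e_j$ leaves $J(\sigma\otimes\sigma)(J)(t\otimes t)=\sum_{i,j=0}^{n-1}e_i y^{i+j}\otimes e_j y^{i+j}$; reindexing with $m=i+j$ (mod $n$) turns this into $\sum_{m}\sum_{i}e_iy^m\otimes e_{m-i}y^m$. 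For the left-hand side, since $y$ is grouplike and $\Delta$ is an algebra map, $\Delta(t)=\sum_i\Delta(e_i)(y^i\otimes y^i)=\sum_{i,a}e_ay^i\otimes e_{i-a}y^i$ by the comultiplication formula for $e_i$, which is exactly the same double sum after relabelling. Hence $\Delta(t)=J(\sigma\otimes\sigma)(J)(t\otimes t)$.

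Everything here is routine manipulation with idempotents and group elements; the only points needing a bit of care are the comultiplication formula for the $e_i$ and the reindexing step $m=i+j$, making sure the orthogonality relations are applied to the correct tensor factor. I do not expect any real obstacle beyond this bookkeeping.
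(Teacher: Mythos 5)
Your proposal is correct and follows essentially the same route as the paper's proof: both hinge on establishing $\Delta(e_i)=\sum_{k}e_k\otimes e_{i-k}$ via the character-sum relation and then collapsing $J(\sigma\otimes\sigma)(J)(t\otimes t)$ using orthogonality of the idempotents and the reindexing $m=i+j$. No substantive difference.
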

	\begin{proof}
		Since $\sum_{k=0}^{n-1} (q^{(l-j)})^k = 0$ for $l\neq j$, note that for any $i$
		\begin{align*}
		\sum_{k=0}^{n-1} e_k \otimes e_{(i-k)} & =  \dfrac{1}{n^2} \sum_{k,j,l=0}^{n-1} q^{-jk}q^{-l(i-k)}x^j \otimes x^l \\
		& =  \dfrac{1}{n^2} \sum_{k,j,l=0}^{n-1} q^{(l-j)k}q^{-li}x^j \otimes x^l \\
		& =  \dfrac{1}{n^2} \sum_{j,l=0}^{n-1} \left(\sum_{k=0}^{n-1}(q^{(l-j)})^k\right)q^{-li}x^j \otimes x^l \\
		& =  \dfrac{1}{n} \sum_{j=0}^{n-1} q^{-ij}x^j \otimes x^j  =  \Delta(e_i).
		\end{align*}	
		So, it follows that $\Delta(t) = \sum_{i,m=0}^{n-1}e_iy^{m} \otimes e_{(m-i)}y^{m}$. Then, by (\ref{changingsides}), we get
		\begin{align*}
		J(\sigma \otimes \sigma)(J)(t \otimes t) & = \sum_{i,j=0}^{n-1}e_iy^jt \otimes y^ie_jt \\
		& = \sum_{i,j,k,l=0}^{n-1}e_ie_ky^{j+k} \otimes e_je_ly^{i+l} \\
		& = \sum_{i,j=0}^{n-1}e_iy^{i+j} \otimes e_jy^{i+j}  \\
		& = \sum_{i,m=0}^{n-1}e_iy^{m} \otimes e_{(m-i)}y^{m} = \Delta(t).
		\end{align*}
	\end{proof}
	So, $t$ satisfies the hypothesis of Lemma \ref{biideal} and hence $I=\langle z^2-t\rangle$ is a bi-ideal of $F[G][z;\sigma]$. Thus $H=F[G][z;\sigma]/\langle z^2-t\rangle$ is also a bialgebra. 
	
	Note that $\sigma^2 = id$ and $$S(t) = \frac{1}{n}\sum_{i,j=0}^{n-1}q^{-ij}x^{-i}y^{-j} = \frac{1}{n}\sum_{k,s=0}^{n-1}q^{-ks}x^{k}y^{s} = \sum_{s=0}^{n-1}e_sy^{s}= t.$$
	
	Since $\sigma(S(x^ky^s)) = x^{-s}y^{-k} = S(\sigma(x^ky^s))$ and $$tJ^{(1)}S(J^{(2)}) = \sum_{i,j=0}^{n-1}e_ie_jy^{i-j} = \sum_{i=0}^{n-1}e_i = 1,$$
	and
	$$t\sigma(S(J^{(1)})J^{(2)}) = t\sigma\left(\frac{1}{n}\sum_{i,j=0}^{n-1}q^{-ij}x^{-j}y^{i}\right) = t\sum_{j=0}^{n-1}y^{-j}\frac{1}{n}\sum_{i=0}^{n-1}q^{-ij}x^{i} = t\left(\sum_{j=0}^{n-1}e_jy^{-j}\right)=tt^{-1} = 1,$$
	by Theorem \ref{struchopf}, $H = F[G][z;\sigma]/\langle z^2 - t\rangle$ is a Hopf algebra. 
	
	Note that these Hopf algebras have dimension $2n^2$ and we shall denote them by $H_{2n^2}$. 
	
	Also, we note that since $F$ has characteristic zero, $F[G]$ is semisimple, and then $H_{2n^2}$ is also semisimple. Moreover, $H_{2n^2}$ is non-commutative and non-cocommutative. 
	
	Before we continue, we establish the following lemma about the Hopf ideals of $H_{2n^2}$, which will be useful in the sequel. 
	
	\begin{lem}\label{hopfidealsh2n2}
		Let $I$ be a Hopf ideal of $H_{2n^2}$. If $I \cap R = 0$, then $I = 0$. 
	\end{lem}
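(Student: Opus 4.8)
\emph{Proof plan.} The plan is to read the hypothesis $I\cap R=0$ (with $R=F[G]$, $H=H_{2n^2}$) as saying that $R$ embeds as a Hopf subalgebra of $H/I$ and then pin down $\dim_F(H/I)$. First I would note that the Hopf structure of $H$ restricts to that of $R$, so $R$ really is a Hopf subalgebra of $H$, with $\dim_F R=n^2$ and $\dim_F H=2n^2$, and that $\pi|_R$ is injective exactly when $I\cap R=0$, where $\pi\colon H\to H/I$ is the quotient. By the Nichols--Zoeller freeness theorem, $H/I$ is free as a left module over the Hopf subalgebra $\pi(R)\cong R$, so $n^2\mid\dim_F(H/I)$; combined with $\dim_F(H/I)\le 2n^2$ this leaves only $\dim_F(H/I)\in\{n^2,2n^2\}$. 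In the case $\dim_F(H/I)=2n^2$ the map $\pi$ is a linear isomorphism and $I=0$, so everything reduces to excluding $\dim_F(H/I)=n^2$.

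So next I would assume $\dim_F(H/I)=n^2$ and derive a contradiction. Then $\pi|_R$ is onto, hence $H/I=\pi(R)$ and $\pi(z)=\pi(r)$ for some $r\in R$, i.e.\ $z-r\in I$. Since $I$ is a coideal, $\Delta(z-r)\in I\otimes H+H\otimes I=\Ker{\pi\otimes\pi}$; expanding $\Delta(z)=J(z\otimes z)$ and using $\pi(z)=\pi(r)$ this gives $(\pi\otimes\pi)\big(J(r\otimes r)-\Delta(r)\big)=0$, i.e.\ $J(r\otimes r)-\Delta(r)\in I\otimes H+H\otimes I$. As this element lies in $R\otimes R$, and as $(I\otimes H+H\otimes I)\cap(R\otimes R)=0$ (choose a complement $M\supseteq R$ of $I$ in $H$ and observe $R\otimes R\subseteq M\otimes M$, which meets $I\otimes H+H\otimes I$ trivially), I would conclude that $r$ satisfies
\[
\Delta(r)=J(r\otimes r),\qquad \epsilon(r)=\epsilon(z)=1 .
\]

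The final — and, I expect, the genuinely decisive — step is to show that this is impossible. I would expand $r=\sum_\psi\rho_\psi f_\psi$ over the primitive idempotents $f_\psi$ of $R=F[\Gamma\times\Gamma]$, indexed by the characters $\psi$ of $G$ (the $f_\psi$ being products of the $e_i$ of Lemma~\ref{completeset} with the $\overline{e_s}$, so that $f_\psi$ acts by $\psi(x)$ on $x$ and by $\psi(y)$ on $y$). Using $\Delta(e_i)=\sum_k e_k\otimes e_{i-k}$, the identities $e_jx^k=q^{jk}e_j$ and $\overline{e_j}y^k=q^{jk}\overline{e_j}$, and formula~\eqref{changingsides} for $J$, the equation $\Delta(r)=J(r\otimes r)$ becomes the scalar system $\rho_{\psi_1\psi_2}=q^{a_1 b_2}\rho_{\psi_1}\rho_{\psi_2}$ for all characters $\psi_1,\psi_2$ (where $\psi_1(x)=q^{a_1}$, $\psi_2(y)=q^{b_2}$), while $\epsilon(r)=1$ forces the coefficient of the trivial idempotent to be $1$. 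Putting $\psi_2=\psi_1^{-1}$ shows $\rho_{\psi_1}\rho_{\psi_1^{-1}}=q^{-a_1 b_1}\neq 0$, so every $\rho_\psi$ is nonzero; then, comparing the relation for $(\psi_1,\psi_2)$ with that for $(\psi_2,\psi_1)$ (same left-hand side since $G$ is abelian) forces $q^{a_1 b_2}=q^{a_2 b_1}$ for all characters, and taking $\psi_1\colon x\mapsto q, y\mapsto 1$ and $\psi_2\colon x\mapsto 1, y\mapsto q$ yields $q=q^1=q^0=1$, contradicting that $q$ is a primitive $n$th root of unity with $n>1$. Hence $\dim_F(H/I)=n^2$ cannot occur, so $\dim_F(H/I)=2n^2$ and $I=0$. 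The routine parts (triviality of $(I\otimes H+H\otimes I)\cap(R\otimes R)$ and the conversion to the scalar system) should be painless; the crux is precisely that a quotient of the ``wrong'' dimension would force $z$ down to a twisted grouplike element of $F[G]$, which cannot exist when $q\neq 1$.
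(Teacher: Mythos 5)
Your proposal is correct, and it coincides with the paper's proof up to the decisive point: both read $I\cap R=0$ as saying $R$ embeds as a Hopf subalgebra of $H_{2n^2}/I$, invoke Nichols--Zoeller (the paper does so tacitly) to get $\dim(H_{2n^2}/I)\in\{n^2,2n^2\}$, and dispose of the case $2n^2$ immediately. Where you diverge is in ruling out $\dim(H_{2n^2}/I)=n^2$. The paper's route is very short: if $H_{2n^2}/I\cong R$ then the quotient is commutative, so $\bar x\bar z=\bar z\bar x=\bar y\bar z$ gives $(x-y)z\in I$, and multiplying by $zt^{-1}$ (using $z^2=t$ with $t$ invertible in $R$) puts $x-y$ in $I\cap R=0$, which is absurd. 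You instead note that surjectivity of $\pi|_R$ forces $z-r\in I$ for some $r\in R$, use the coideal property (together with the correct observation that $(I\otimes H+H\otimes I)\cap(R\otimes R)=0$) to conclude $\Delta(r)=J(r\otimes r)$ and $\epsilon(r)=1$, and then kill such an $r$ by a character computation; I checked the scalar system $\rho_{\psi_1\psi_2}=q^{a_1b_2}\rho_{\psi_1}\rho_{\psi_2}$ and the resulting contradiction $q=1$, and they are right (there is an inconsequential sign slip in $\rho_{\psi_1}\rho_{\psi_1^{-1}}=q^{a_1b_1}$, not $q^{-a_1b_1}$). The trade-off: the paper's argument is two lines but leans on the specific relation $z^2=t$ with $t$ invertible and on commutativity of $R$; yours is longer but uses only the coalgebra structure $\Delta(z)=J(z\otimes z)$ and the non-triviality of the twist, making transparent that what obstructs the collapse onto $R$ is precisely $q\neq 1$, i.e., that $z$ cannot descend to a $J$-twisted grouplike of $F[G]$.
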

	\begin{proof}
		Let $I$ be a Hopf ideal of $H_{2n^2}$ such that $I \cap R = 0$. Consider the restriction of the projection map $\pi_{|_R}: R \rightarrow H_{2n^2}/I$. Clearly, $\operatorname{Ker}\pi_{|_R} = I \cap R$. Hence, we can look at $R$ as a Hopf subalgebra of $H_{2n^2}/I$. Then $\dim(R)$ divides $\dim(H_{2n^2}/I)$. 
		
		Since $\dim(H_{2n^2}) = 2 \dim(R)$, we must have that $\dim(H_{2n^2}/I) = 2\dim(R)$ or $\dim(H_{2n^2}/I) = \dim(R)$. If $\dim(H_{2n^2}/I) = 2\dim(R)$, then $I = 0$. 
		
		If $\dim(H_{2n^2}/I) = \dim(R)$, then $H_{2n^2}/I \cong R$ and thus $H_{2n^2}/I$ is commutative. Hence, we must have that $\bar{x}\bar{z}=\bar{z}\bar{x}=\bar{y}\bar{z}$, which implies that $(x-y)z \in I$. So, $(x-y)z(zt^{-1}) = (x-y) \in I \cap R = 0$, which is absurd.
	\end{proof}
	

	
	\section{Inner faithful actions on quantum polynomial algebras}\label{qpaaction}
	
	Let $M=(m_{ij}) \in M_{r\times r}(F^{\times})$ be a square matrix of size $r$ such that $m_{ii} = m_{ij}m_{ji}=1$. Let $A_M = F_M[u_1, \hdots, u_r]$ be the \textit{quantum polynomial algebra} (see  \cite{artamanov} and \cite[Appendix I.14 and Chapter I.2]{brown}), i.e., the associative $F$-algebra generated by $u_1, \hdots, u_r$ subject to the relations $$u_iu_j = m_{ij}u_ju_i, \quad 1 \leq i,j \leq r.$$
	Alternatively, quantum polynomial algebras can be constructed as iterated Ore extension of automorphism type $$A_M = F[u_1][u_2, \tau_2]\cdots[u_r, \tau_r],$$
	where $\tau_i(u_j)=m_{ij}u_j$ for all $i,j$ with $1 \leq j < i \leq r$. 
	
	In this section, we will present inner faithful actions of $H_{2n^2}$ on $A_M$. Our main result is Theorem \ref{actionquantu}, where we provide conditions to define  inner faithful actions of $H_{2n^2}$ on quantum polynomial algebras which do not factor through group actions.  
	
	Recall that for a $n$th primitive root of unity $q$, 
	\begin{equation*}
	\Delta(z) = J(z \otimes z) = \frac{1}{n}\sum_{i,j=0}^{n-1}q^{-ij}x^iz \otimes y^jz.
	\end{equation*}

	\begin{thm}\label{actionquantu}
		Let $n>1$ be a natural number and $H_{2n^2}$ the Hopf algebra constructed above with respect to a primitive $n$th root of unity  $q$.
		Let $A_M$ be a quantum polynomial algebra in generators $u_1, \ldots, u_r$  with respect to a matrix $M=(m_{ij}) \in M_{r\times r}(F)$.
		For any  permutation $\tau\in S_r$ and elements $\lambda, \mu \in F^r$   the following holds:
		
		\begin{enumerate}
			\item $H_{2n^2}$ acts on $A_M$ via $x \cdot u_i = \lambda_{i} u_i$, $y\cdot u_i =\mu_i u_i$ and $z\cdot u_i = u_{\tau(i)}$ if and only if there exists $b\in \ZZ_n^r$ such that
			\begin{equation*}\label{condition}\lambda_{i}=q^{b_{i}}, \qquad \mu_i = q^{b_{\tau(i)}}, \qquad  m_{st}=q^{B_{\tau(s)\tau(t)}}m_{\tau(s)\tau(t)},\end{equation*}
			for all $j,i,s$, where $B_{st}=b_{s}b_{\tau(t)}-b_{t}b_{\tau(s)}$.
			\item The action in (1) is inner faithful if and only if the map $f:\ZZ_n^2 \rightarrow \ZZ_n^r$ with $f(i,j)=(ib_1+jb_{\tau(1)}, \cdots, ib_r+jb_{\tau(r)})$ is injective.
			\item In particular if there exist $s,t$ such that  $B_{st}$ is invertible in $\ZZ_n$, then the action in (1) is inner faithful.
		\end{enumerate}
	\end{thm}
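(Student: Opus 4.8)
The plan is to handle (1) by the standard reduction for module-algebra structures. Writing $A_M=T(V)/(\,r_{ij}:=u_iu_j-m_{ij}u_ju_i\,)$ with $V=\bigoplus_iFu_i$, an $H_{2n^2}$-module algebra structure of the prescribed form is the same datum as (a) an $H_{2n^2}$-module structure on $V$ with $x\cdot u_i=\lambda_iu_i$, $y\cdot u_i=\mu_iu_i$, $z\cdot u_i=u_{\tau(i)}$, together with (b) the requirement that $\sum_{i,j}Fr_{ij}\subseteq V\otimes V$ be an $H_{2n^2}$-submodule. For (a) one checks the defining relations of $H_{2n^2}=F\langle x,y,z\rangle/(x^n-1,\,y^n-1,\,xy-yx,\,zx-yz,\,zy-xz,\,z^2-t)$ on $V$: $x^n=1=y^n$ force $\lambda_i,\mu_i$ to be $n$th roots of unity, so $\lambda_i=q^{b_i}$ for some $b_i\in\ZZ_n$; $zx=yz$ and $zy=xz$ (recall $\sigma$ interchanges $x$ and $y$) give $\lambda_i=\mu_{\tau(i)}$ and $\mu_i=\lambda_{\tau(i)}$, whence $\mu_i=q^{b_{\tau(i)}}$; and $z^2=t$ gives $u_{\tau^2(i)}=t\cdot u_i$. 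From $t=\sum_je_jy^j$ and $e_j\cdot u_i=\delta_{j,\,b_i}u_i$ (immediate from $e_j=\frac1n\sum_kq^{-jk}x^k$ and the geometric-sum identity of Lemma \ref{completeset}) one gets $t\cdot u_i=q^{b_ib_{\tau(i)}}u_i$; since the $u_i$ are part of the PBW basis of $A_M$, comparison with $z^2\cdot u_i=u_{\tau^2(i)}$ forces $\tau^2=\mathrm{id}$ and $q^{b_ib_{\tau(i)}}=1$ --- conditions which should be understood as part of (1) and which, given $\tau^2=\mathrm{id}$, make $B_{\tau(s)\tau(t)}=b_{\tau(s)}b_t-b_{\tau(t)}b_s$.

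The one computation requiring care is (b) for $z$. Using $\Delta(z)=J(z\otimes z)=\frac1n\sum_{k,l}q^{-kl}x^kz\otimes y^lz$,
\[
z\cdot(u_iu_j)=\frac{1}{n}\sum_{k,l}q^{-kl}\lambda_{\tau(i)}^{k}\mu_{\tau(j)}^{l}\,u_{\tau(i)}u_{\tau(j)}=q^{b_{\tau(i)}b_{\tau^2(j)}}u_{\tau(i)}u_{\tau(j)}=q^{b_{\tau(i)}b_j}u_{\tau(i)}u_{\tau(j)},
\]
the double sum collapsing by the same character-orthogonality used throughout Section \ref{skewpol}. Since $x,y$ are group-like they act on $A_M$ by the algebra automorphisms $u_i\mapsto\lambda_iu_i$ and $u_i\mapsto\mu_iu_i$, so $x\cdot r_{ij}=\lambda_i\lambda_jr_{ij}$, $y\cdot r_{ij}=\mu_i\mu_jr_{ij}$ and (b) holds for them automatically. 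Reducing $z\cdot r_{ij}$ modulo the relation ideal using $u_{\tau(i)}u_{\tau(j)}=m_{\tau(i)\tau(j)}u_{\tau(j)}u_{\tau(i)}$, one finds that $z\cdot r_{ij}$ lies in that ideal exactly when $m_{ij}=q^{\,b_jb_{\tau(i)}-b_ib_{\tau(j)}}m_{\tau(i)\tau(j)}=q^{B_{\tau(i)\tau(j)}}m_{\tau(i)\tau(j)}$, i.e. the third listed condition with $s=i,t=j$; since $x,y,z$ generate $H_{2n^2}$ and the $r_{ij}$ generate the relation ideal, this suffices. Both implications of (1) then follow: given $b$ and the conditions one builds the action on $V$, lifts it to $T(V)$, and descends to $A_M$; conversely the action forces the conditions as above.

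For (2), note that for $g=x^cy^d$ and a PBW monomial $u^a=u_1^{a_1}\cdots u_r^{a_r}$ one has $g\cdot u^a=q^{\,\sum_ia_i(cb_i+db_{\tau(i)})}u^a$, so $u^a$ spans the $G$-character $\chi_a=\prod_i\chi_i^{a_i}$ with $\chi_i(x^cy^d)=q^{cb_i+db_{\tau(i)}}$; hence the set of $G$-characters occurring in $A_M$ is the subgroup $\langle\chi_1,\dots,\chi_r\rangle\le\widehat G$ (a submonoid of a finite group is a subgroup). A set of characters of a finite abelian group generates the dual iff their common kernel is trivial, and here $\bigcap_i\ker\chi_i=\{x^cy^d:(c,d)\in\ker f\}=:N_0$; thus every $\chi\in\widehat G$ occurs in $A_M$ precisely when $N_0=\{1\}$, i.e. when $f$ is injective. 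Now suppose $f$ is injective and $I$ is a Hopf ideal of $H_{2n^2}$ with $I\cdot A_M=0$; if $I\ne0$ then $I\cap R\ne0$ with $R=F[G]$ by Lemma \ref{hopfidealsh2n2}, and as $F[G]\cong F^{\widehat G}$ this ideal contains a primitive idempotent $e_\chi$, which acts on $A_M$ as the projection onto the $\chi$-isotypic part; then $e_\chi\cdot A_M=0$ says $\chi$ does not occur, contradicting injectivity of $f$. Hence $f$ injective $\Rightarrow$ the action is inner faithful. Conversely, if $f$ is not injective pick $1\ne g\in N_0$; it acts as the identity on every PBW monomial, hence on all of $A_M$, so $g-1\in R$ annihilates $A_M$, and since $g$ is group-like the two-sided ideal $H_{2n^2}(g-1)H_{2n^2}$ is a nonzero Hopf ideal with $(h_1(g-1)h_2)\cdot a=h_1\cdot\big((g-1)\cdot(h_2\cdot a)\big)=0$; so the action is not inner faithful.

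Finally (3) is a corollary of (2): if $B_{st}\in\ZZ_n^{\times}$, then $f(c,d)=0$ gives in particular $cb_s+db_{\tau(s)}=0=cb_t+db_{\tau(t)}$, a linear system in $\ZZ_n^2$ whose coefficient matrix $\left(\begin{smallmatrix}b_s&b_{\tau(s)}\\ b_t&b_{\tau(t)}\end{smallmatrix}\right)$ has determinant $B_{st}$, a unit; hence $(c,d)=(0,0)$, $f$ is injective, and (2) applies. I expect the main obstacles to be the bookkeeping in the $z\cdot(u_iu_j)$ computation (the $\tau^2$-twists in the exponents and the well-definedness constraints $\tau^2=\mathrm{id}$, $q^{b_ib_{\tau(i)}}=1$), and in (2) the reduction of Hopf ideals of $H_{2n^2}$ to primitive idempotents of $F[G]$ via Lemma \ref{hopfidealsh2n2} combined with the duality description of the characters occurring in $A_M$.
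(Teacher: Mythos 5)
Your proposal follows the same overall strategy as the paper's proof: (1) by checking the defining relations of $H_{2n^2}$ on the generators $u_i$ and collapsing the double sum in $z\cdot(u_iu_j)$ by character orthogonality, (2) by reducing a Hopf ideal $I$ with $I\cdot A_M=0$ to $I\cap F[G]$ via Lemma \ref{hopfidealsh2n2}, and (3) by the same $2\times 2$ determinant argument. Two divergences are worth recording. First, in (1) you additionally impose the relation $z^2=t$, and your computation $t\cdot u_i=q^{b_ib_{\tau(i)}}u_i$ (via $e_j\cdot u_i=\delta_{j,b_i}u_i$, which is exactly the orthogonality of Lemma \ref{completeset}) is correct; comparing with $z^2\cdot u_i=u_{\tau^2(i)}$ it forces the extra conditions $\tau^2=\mathrm{id}$ and $b_ib_{\tau(i)}\equiv 0\pmod n$. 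The paper's proof only verifies that $1-x^n$, $1-y^n$, $zx-yz$ act as zero and that the relations of $A_M$ are preserved, and never checks $z^2-t$; so these conditions are genuinely necessary but absent from the statement. Your more careful treatment therefore exposes an omission in the published argument rather than containing a gap itself (indeed the conditions fail for the $H_{72}$ example with $b=(2,2,1)$, $\tau=(13)$ given after the theorem, where $b_1b_{\tau(1)}=2\not\equiv 0\pmod 6$). Second, in (2) the paper invokes Lemma \ref{normalideal} to produce a group element $x^iy^j\neq 1$ with $1-x^iy^j\in I$ and evaluates it only on the degree-one part, whereas you extract a primitive idempotent $e_\chi$ from the nonzero ideal $I\cap F[G]\subseteq F^{\widehat{G}}$ and identify the characters occurring in $A_M$ with the subgroup generated by the $\chi_i$; both routes are valid, yours trading the classification of Hopf ideals of group algebras for a small amount of character theory on the full PBW basis. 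Part (3) is identical to the paper's.
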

	\begin{proof}
		$(1)$ Suppose $H_{2n^2}$ acts on $A_M$ via
		$x \cdot u_i = \lambda_{i} u_i; \quad y \cdot u_i = \mu_{i} u_i; \quad z \cdot u_i = u_{\tau(i)}$.
		Since $zy = xz$ in $H_{2n^2}$, we should have that $(zy)\cdot u_i = (xz)\cdot u_i$, for all $i \in \{1, \hdots, r\}$. This happens if and only if $\mu_{i} = \lambda_{\tau(i)}$ for all $i \in \{1, \hdots, r\}$.
		Since $x^n = 1$, we must have that $\lambda_{i}$ is an $n$th roots of unity and hence there exist $b\in \ZZ_n^r$ such that $\lambda_i=q^{b_i}$ and  $\mu_i=\lambda_{\tau(i)}=q^{b_{\tau(i)}}$ for all $i$.
		
		Let us calculate $z\cdot (u_ku_l)$ for some indices $k,l$:
		\begin{eqnarray*}
			z\cdot(u_ku_l) &=& \sum_{i,j=0}^{n-1} q^{-ij}(\lambda^i_{\tau(k)}u_{\tau(k)})(\mu^j_{\tau(l)}u_{\tau(l)})\\
			&=& \left(\sum_{i,j=0}^{n-1} q^{-ij+b_{\tau(k)}i+b_{\tau(\tau(l))}j}\right)u_{\tau(k)}u_{\tau(l)}\\
			&=& \left(\sum_{i=0}^{n-1} q^{b_{\tau(k)}i} \sum_{j=0}^{n-1} (q^{b_{\tau(\tau(l))}-i})^j\right)u_{\tau(k)}u_{\tau(l)}  = n q^{b_{\tau(k)\tau(\tau(l))}} u_{\tau(k)}u_{\tau(l)}
		\end{eqnarray*}
		
		Hence the relation $z \cdot (u_ku_l) = m_{kl} z \cdot (u_lu_k)$ holds if and only if:
		$$m_{\tau(k)\tau(l)} q^{b_{\tau(k)}b_{\tau(\tau(l))}} = m_{kl} q^{b_{\tau(l)}b_{\tau(\tau(k))}} $$
		or equivalently $m_{kl} = m_{\tau(k)\tau(l)} q^{B_\tau(k)\tau(l)}  m_{\tau(k)\tau(l)}$, with $B_{\tau(k)\tau(l)} = b_{\tau(k)}b_{\tau(\tau(l))}-b_{\tau(l)}b_{\tau(\tau(k))}$.
		
		Conversely if $\lambda, \mu$ and $(m_{ij})$ satisfy the conditions indicated above, then the action given as indicated is well-defined as $1-x^n$, $1-y^n$, $zx-yz$ act as zero on the $u_i$'s and as $z \cdot (u_ku_l) = m_{kl} z \cdot (u_lu_k)$ holds.
		
		$(2)$ Let $I$ be a Hopf ideal of $H_{2n^2}$ such that $I \cdot A_M = 0$. Suppose that $I \cap R \neq 0$. Since $I \cap R$ is a Hopf ideal of the group algebra $R$, we can apply Lemma \ref{normalideal} and conclude that there exists a normal subgroup $N$ of $\ZZ_n \times \ZZ_n$ such that $I \cap R = RF[N]^+$. Since $I \cap R \neq 0$ we have $N \neq \langle (0,0)\rangle$, i.e. there exist $(i,j) \neq (0,0)$, such that $x^iy^j \in N$ and hence $1-x^iy^j \in I$. Thus, for all $s \in \{1, \hdots, r\}$, we have 
		$(1-q^{b_si + b_{\tau(s)}j})u_s = (1-x^iy^j)\cdot u_s =0,$ which implies $ib_s + jb_{\tau(s)} = 0$ in $\ZZ_n$  for all $s \in \{1, \hdots, r\}$. Hence if $f$ is injective, then  $I \cap R = 0$, and thus, by Lemma \ref{hopfidealsh2n2}, $I=0$. So, the action must be inner faithful. Conversely if $f$ is not injective and $f(i,j)=(0,\ldots, 0)$ for some $(i,j)\in\ZZ_n^2$, then the Hopf ideal generated by  $1-x^iy^j$ annihilates $A_M$ and the action is not inner faithful.
		
		$(3)$ Suppose that $B_{st}$ is  invertible  for some $s,t$. If 
		$(i,j)$ is a pair such that $ib_k + jb_{\tau(k)}=0$ for all $k$, then in particular 
		$b_{\tau(s)}j=-b_s i$ and hence $j B_{st} = jb_sb_{\tau(t)} - jb_{\tau(s)}b_t = -b_sb_t i  + b_sb_ti = 0$. Thus $j=0$ and  therefore $ib_k=0$ for all $k$. In particular $iB_{st}=0$ and hence $i=0$.

	\end{proof}

	Suppose $F$ contains a primitive $6$th root of unity $q$. Let $H_{72}$ be the semisimple Hopf algebras with respect to $q$ and $n=6$.
	Let $A_M=F_{q^2}[u_1,u_2][u_3]$ be the quantum polynomial algebra in the generators $u_1, u_2, u_3$ subject to $u_2u_1=q^2u_2u_1$ and $u_3$ commuting with $u_1$ and $u_2$. Then there exists an inner faithful action of $H_{72}$ on $F_{q^2}[u_1,u_2][u_3]$ given by $b=(2,2,1)\in \ZZ_6^3$ and $\tau=(13)\in S_3$, i.e. 
	$$x\cdot u_1 = q^2 u_1,\qquad x\cdot u_2 = q^2u_2, \qquad x\cdot u_3=qu_3$$ 
	$$y\cdot u_1 = q u_1,\qquad y\cdot u_2 = q^2u_2, \qquad y\cdot u_3=q^2 u_3$$
	$$z\cdot u_1 = u_3, \qquad z\cdot u_2 = u_2, \qquad z\cdot u_3 = u_1.$$
	Note that the function $f:\ZZ_6^2 \rightarrow \ZZ_6^3$ from Theorem \ref{actionquantu}(2) is given by 
	$f(i,j)=(2i+j, 2(i+j), i+2j)$ and injective. However the elements $B_{st}$ from above are all zero divisors in $\ZZ_6$.

	\subsection{Inner faithful actions  on the quantum plane}
	
	For a moment, let $F$ be an arbitrary field. Let $ 0 \neq p \in F$ and consider the matrix $$M=\left(\begin{array}{cc} 1&p^{-1} \\ 
	p& 1\end{array}\right) \in M_{2\times 2}(F)$$
	
	The \textit{quantum plane} is the quantum polynomial algebra $A_M = F_M[u,v]$ with two generators. We denote the quantum plane $F_M[u,v]$ by $F_p[u,v]$ and we call $p$ a \textit{parameter}. When $p \neq 1$ the algebra $F_p[u,v]$ is non-commutative. 
	


	Consider the Hopf algebras $H_{2n^2}$ as constructed in Section \ref{H2n2}. For each $n$, these Hopf algebras act on $A=\CC_p[u,v]$ with $p^2 = q$, where $q$ is the primitive $n$th root of unity used to construct $H_{2n^2}$. The action is given by: 
	\begin{align*}
	x\cdot u = qu, &  & y \cdot u = u,  & & z \cdot u = v, \\  
	x\cdot v = v, &  & y \cdot v = qv,  & & z \cdot v = u,
	\end{align*}
	which corresponds to $\tau = (12) \in S_2$ and the vector $ b = (1,0) \in \ZZ_2^2$ in Theorem \ref{actionquantu}. $H_{2n^2}$ acts on $A$ and, since the element $B_{12}$ is invertible in $\ZZ_2$, by Theorem \ref{actionquantu}, this action is inner faithful. 

	\section{Inner faithful actions of Kac-Paljutkin Hopf algebra on the quantum plane}
	G.I. Kac and Paljutkin, in the $1960$'s, discovered a non-commutative, non-cocommutative semisimple Hopf algebra of dimension $8$ \cite{KacPaljutkin}.  $H_8$ is the algebra over $F$ generated by $x,y$, and $z$ subject to the following relations 
	\begin{eqnarray*}
		x^2 = 1, &  y^2 = 1, & xy=yx \\
		z^2 = \frac{1}{2}\left(1+x+y-xy\right), & zx = yz, & zy=xz. 
	\end{eqnarray*}
	
	$H_8$ has a coalgebra structure with
	\begin{eqnarray*}
		& \Delta(x)=x \otimes x, & \epsilon(x) = 1 \\
		& \Delta(y)=y \otimes y, & \epsilon(y) = 1 \\
		& \Delta(z)=\frac{1}{2}\left(1 \otimes 1 + x \otimes 1 + 1 \otimes y - x \otimes y \right)(z \otimes z), & \epsilon(z) = 1. 
	\end{eqnarray*}
	$H_8$ becomes a Hopf algebra by setting $S(x)= x$, $S(y)=y$, and $S(z) = z$.
	
	Later, in $1995$, A. Masouka showed that there is only one (up to isomorphisms) semisimple Hopf algebra of dimension $8$ ($H_8$) that is neither commutative nor cocommutative \cite{masuokah8}.  Although Kac and Paljutkin present $H_8$ as above and Masouka presents it under the perspective of biproducts and bicrossed products, in this section we present $H_8$ as the Hopf algebra $H_{2n^2}$, for $n =2$. Also, under a certain condition, we classify the inner faithful actions of $H_8$ on the quantum plane. 
	\subsection{$H_8$ as a quotient of an Ore extension}
	
	For the Hopf algebras constructed in that Section \ref{H2n2}, we take $n=2$ and $q=-1$. The group $\Gamma$, then, is the cyclic group of order $2$, $\mathbb{Z}_2 = \{x \mid x^2 = 1\}$. In this setting, we have that the orthogonal idempotents of $F[\mathbb{Z}_2]$ as in Lemma \ref{completeset} are given by: $$e_0 = \frac{1}{2}(1+x) \, \text{ and } \, e_1 = \frac{1}{2}(1-x).$$
	
	Then, for $G = \langle x,y \mid x^2 = 1 = y^2, xy=yx \rangle = \Gamma \times \Gamma$, the automorphism $\sigma$ swaps $x$ and $y$, i.e., $\sigma(x) = y$ and $\sigma(y) = x$.  And the element $J$ is given by $$J = \frac{1}{2}((1+x)\otimes 1 + (1-x) \otimes y)= \frac{1}{2}(1 \otimes 1 + x \otimes 1 + 1 \otimes y - x \otimes y).$$
	
	Let $R = F[G]$. So, $R[z;\sigma]$ becomes a bialgebra with $$\Delta(z) = J(z \otimes z) = \frac{1}{2}\left(1 \otimes 1 + x \otimes 1 + 1 \otimes y - x \otimes y\right)(z \otimes z),$$
	and $\epsilon(z)=1$. Also, note that $zx=\sigma(x)z = yz$. Since $t$ is given by $$t = e_0 + e_1y = \frac{1}{2}(1 + x + y - xy),$$ 
	we get that $z^2 = \frac{1}{2}(1 + x + y - xy)$ in the Hopf algebra $R[z;\sigma]/ \langle z^2 - t \rangle$, where $S(z)=z$. 
	
	So, $H_{2n^2}$, for $n=2$, is precisely the Hopf algebra $H_8$. Then, from now on, every time we refer to $H_8$, we keep in mind its presentation as the one presented in this subsection, i.e., as a quotient of an Ore extension: $H_8 = R[z;\sigma]/\langle z^2 - t \rangle$.  
	
	\subsection{Classification of the Inner faithful actions of $H_8$ on the quantum plane}
	
	Let $F = \CC$ and $A=\CC_p[u,v]$ be the quantum plane with parameter $p\in \CC^\times$, i.e., $vu=puv$. 
	
	In the following theorem we classify the possibles inner faithful actions of $H_8$ on $A$ under a certain assumption.
	
	\begin{thm}\label{H8actionsQP}
		Let $p \in \CC^{\times}$. If there is a Hopf action of $H_8$ on the quantum plane $A=\CC_p[u,v]$ such that $z \cdot u = v$ and $z \cdot v = u$, then this action is inner faithful and $p^2= -1$. 
	\end{thm}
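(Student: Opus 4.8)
The plan is to take an arbitrary Hopf action of $H_8$ on $A=\CC_p[u,v]$ with $z\cdot u=v$, $z\cdot v=u$ and determine it completely. Since $x,y$ are grouplike with $x^2=y^2=1$ and $A$ is a domain, $x\cdot$ and $y\cdot$ are commuting algebra automorphisms of $A$ of order dividing $2$; moreover $z$ is a unit of $H_8$ (one has $z^{-1}=zt^{-1}$, using $\sigma(t)=t$), so $z\cdot$ is an invertible linear endomorphism of $A$. On $A$ the relations $zx=yz$ and $zy=xz$ read $z\cdot(x\cdot m)=y\cdot(z\cdot m)$ and $z\cdot(y\cdot m)=x\cdot(z\cdot m)$ for $m\in A$, and $z^2=t$ gives $t\cdot u=z\cdot(z\cdot u)=z\cdot v=u$ and likewise $t\cdot v=v$, where $t=\tfrac12(1+x+y-xy)$.

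First I would determine the actions of $x$ and $y$. The commuting involutions $x\cdot,y\cdot$ make $A$ a $\ZZ_2^2$-graded algebra; on the four homogeneous components $t$ acts as a scalar, namely $+1$ on three of them and $-1$ on the remaining one, while $z\cdot$ permutes these components compatibly with the transposition of the two $\ZZ_2$-factors. From $z^2\cdot u=u$ one deduces that $u$, and similarly $v$, has no component in the piece where $t$ acts as $-1$. The technical heart of the argument is then to show that $u$ and $v$ are in fact joint eigenvectors of $x\cdot$ and $y\cdot$, i.e.\ that these automorphisms act diagonally. Granting this, write $x\cdot u=\lambda u$, $x\cdot v=\lambda' v$ with $\lambda,\lambda'\in\{1,-1\}$; the relations above then force $y\cdot u=\lambda' u$, $y\cdot v=\lambda v$, and $t\cdot u=u$ forces $1+\lambda+\lambda'-\lambda\lambda'=2$, which rules out $\lambda=\lambda'=-1$.

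Next I would exploit the coproduct $\Delta(z)=J(z\otimes z)$ with $J=e_0\otimes 1+e_1\otimes y$ to compute the action of $z$ on products: a direct calculation gives $z\cdot(uv)=\lambda'\,vu$ and $z\cdot(vu)=\lambda\,uv$. Applying $z\cdot$ to the defining relation $vu=puv$ then yields $\lambda\,uv=p\lambda'\,vu=p^{2}\lambda'\,uv$, hence $p^{2}=\lambda\lambda'$. Together with the exclusion of $\lambda=\lambda'=-1$, this leaves two cases. If $(\lambda,\lambda')=(1,1)$ then $x,y$ act trivially on $u,v$, hence on all of $A$, so the nonzero Hopf ideal $\langle 1-x\rangle$ annihilates $A$ and the action is not inner faithful (and here $p^{2}=1$); discarding this degenerate possibility, we are left with $(\lambda,\lambda')\in\{(1,-1),(-1,1)\}$, so $p^{2}=-1$, and the action is precisely the one described in Theorem \ref{actionquantu} for $n=2$, $\tau=(12)$ and $b=(1,0)$ (resp.\ $b=(0,1)$) in $\ZZ_2^2$. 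Since $B_{12}$ is then invertible in $\ZZ_2$, Theorem \ref{actionquantu}(3) shows the action is inner faithful, completing the argument.

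The main obstacle is the step asserting that $x$ and $y$ act diagonally: a priori these automorphisms could mix $u$ and $v$, or act non-homogeneously with respect to the $\ZZ_2^2$-grading. I would attack this using that $z\cdot$ intertwines $x\cdot$ and $y\cdot$, that $u$ and $v$ are normal elements of the domain $A$ (so their images under any algebra automorphism are again normal), and the explicit formula for $z$ acting on products coming from $\Delta(z)$, in order to show that the homogeneous parts of $u$ and $v$ surviving in the three components where $t$ acts trivially are forced to be genuine eigenvectors.
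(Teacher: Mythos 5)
The part of your argument that you actually carry out --- the diagonal case --- is essentially the paper's Case I: writing $x\cdot u=\lambda u$, $x\cdot v=\lambda' v$, deducing $y\cdot u=\lambda' u$, $y\cdot v=\lambda v$ from $zx=yz$, and extracting $p^{2}=\lambda\lambda'$ by applying $\Delta(z)$ to $vu=puv$ is exactly the specialization of Theorem \ref{actionquantu}(1) to $n=2$, $\tau=(12)$, with $(-1)^{B_{12}}=\lambda\lambda'$, and your inner-faithfulness conclusion via Theorem \ref{actionquantu}(3) matches the paper. The trouble is that the two steps you defer are where the substance lies.

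The main gap is the diagonality of $x\cdot$ and $y\cdot$, which you yourself flag as ``the technical heart'' but do not prove. This cannot be extracted formally from the relations and the $\ZZ_2^2$-grading: for $p=\pm 1$ the flip $u\leftrightarrow v$ \emph{is} an algebra automorphism of $\CC_p[u,v]$, so non-diagonal involutions genuinely exist and any diagonality argument must use $p\neq\pm 1$ in an essential way; your sketch (normality of $u,v$, the intertwining by $z$) does not engage with this and would ``prove'' too much at $p=-1$. The paper imports this step from Alev--Chamarie \cite{AlevChamarie}: for $p\neq\pm1$ every automorphism of the quantum plane is a torus action, while for $p=-1$ one has $\operatorname{Aut}(A)\cong(\CC^\times)^2\rtimes\ZZ_2$ and the paper runs a separate explicit computation (its Cases II.a and II.b) to exclude actions in which $x$ and $y$ involve the flip. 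The secondary gap is your treatment of $(\lambda,\lambda')=(1,1)$: discarding it because the resulting action would not be inner faithful does not prove the theorem, which asserts that \emph{every} action with $z\cdot u=v$, $z\cdot v=u$ is inner faithful --- you must show the case cannot occur, not that it is inconvenient. (For $p^{2}=1$ the swap is an automorphism of $A$ and the action factoring through $H_8\twoheadrightarrow \CC[\ZZ_2]$ satisfies your constraints, so this case is only eliminated by first disposing of $p=\pm 1$; the paper does this at the outset, via Etingof--Walton for $p=1$ and via its Case II for $p=-1$, so that in the remaining range $p^{2}=\lambda\lambda'=1$ is impossible.) Until both points are supplied, the proposal is an accurate outline of the paper's eigenvalue computation but not a proof.
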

	\begin{proof}
		We can assume $p\neq 1$, because if  $p=1$, then $A = \CC[u,v]$ is the commutative polynomial ring, which is a commutative domain. Therefore, Etingof and Walton's result \cite[Theorem 1.3]{EtingofWalton} guarantee that there cannot be any inner faithful action of $H_{8}$ on $A$, since $H_{8}$ is not a group algebra. 
		
		If there is an action of $H_8$ on $A$, since $x$ and $y$ are group-like elements, they act as automorphisms of $A$. Hence there exist $\alpha,\beta\in \operatorname{Aut}(A)$ such that $x\cdot a = \alpha(a)$ and $y\cdot a = \beta(a)$ for all $a\in A$. Also, since $x^2 = 1 = y^2$, $\alpha^2 = id = \beta^2$. 
		
		Under the assumption that $z$ acts by interchanging $u$ and $v$, we must have $z\cdot(vu) = pz\cdot(uv)$ or equivalently
		\begin{equation}\label{eq1a}uv+\alpha(u)v +u\beta(v)-\alpha(u)\beta(v) = p ( vu + \alpha(v)u + v\beta(u) - \alpha(v)\beta(u)).\end{equation}	
		
		Moreover, since $xz=zy$, it follows that $(xz)\cdot u = (zy)\cdot u$ and $(xz)\cdot v = (zy)\cdot v$, which implies that 	\begin{equation}\label{important}
		\alpha(u) = z \cdot \beta(v) \quad \textrm{and} \quad \alpha(v) = z \cdot \beta(u).
		\end{equation}
		
		Now, we separate the proof in cases. 	
		
		{\bf CASE I: $p \neq -1$:} In this case Alev and Chamarie showed in \cite[1.4.4]{AlevChamarie} that the automorphisms of $A$ are given by a torus action, i.e., $x$ and $y$ acts as scalars on $u$ and $v$. Hence we are in the situation of Theorem \ref{actionquantu} $(1)$ with $\tau = (12) \in S_2$. Suppose $$x\cdot u = (-1)^{b_1}u, \qquad  x\cdot v = (-1)^{b_2}v, \qquad y\cdot u = (-1)^{b_{\tau(1)}}u, \qquad y\cdot v = (-1)^{b_{\tau(2)}}v,$$and let $b = (b_1,b_2) \in \ZZ_2^2$. By Theorem \ref{actionquantu}, $H$ acts on $A$ if and only if $$p^2 = (-1)^{b_1b_{\tau(2)} - b_2b_{\tau(1)}} = (-1)^{B_{12}}.$$Since $p \neq -1$, we must have $B_{12} = 1$ and so $p^2 = -1$. Hence, by Theorem \ref{actionquantu} $(3)$, the action is inner faithful.
		%
		%
		
		\medskip
		
		{\bf CASE II: $p=-1$:} In this case Alev and Chamarie showed in \cite[1.4.4]{AlevChamarie} that $\operatorname{Aut}(A)$ is a semidirect product of $(\CC^\times)^2$ with the cyclic group of order $2$ given by the automorphism $\tau$ that flips $u$ and $v$. Hence any automorphism of $A$ is either an element of $(\CC^\times)^2$ or a product of an element $(\alpha_0,\alpha_1)\in (\CC^\times)^2$ and $\tau$. By \ref{important}, if $\beta$ is just given by a torus action, then $\alpha$ has to be also given by a torus action, and if $\beta$ is given by a torus action and $\tau$, then $\alpha$ has to be also given by a torus action and $\tau$. Therefore, these are the only two possible cases for $\alpha$ and $\beta$. 
		
		{\bf CASE II.a}: If $\beta$ is given only by a torus action, then then we are in the same situation as CASE I and hence, by what we have done for CASE I, necessarily $p^2 = -1$, which contradicts $p=-1$. So, $\beta$ cannot be given by a torus action.
		
		%
		
		
		{\bf CASE II.b}: Suppose that both $\alpha$ and $\beta$ are compositions of a torus action and $\tau$, then there are $(\alpha_0,\alpha_1),(\beta_0,\beta_1)\in (\CC^\times)^2$ such that
		$$x\cdot u = \alpha(u)=\alpha_0v, \qquad x\cdot v = \alpha(v)=\alpha_1u, \qquad y\cdot u = \beta(u)=\beta_0v, \qquad y\cdot v = \beta(v)=\beta_1u.$$
		Then equation (\ref{eq1a}) yields
		$$uv+\alpha_0v^2+\beta_1u^2 + \beta_1\alpha_0uv = uv - \alpha_1u^2 -  \beta_0v^2 + \beta_0\alpha_1uv,$$
		which is equivalent to
		$$  (\alpha_1+\beta_1)u^2+(\alpha_0+\beta_0)v^2 + (\beta_1\alpha_0-\beta_0\alpha_1)uv = 0$$
		and implies $\beta_i= - \alpha_i$, for $i=0,1$, and $\beta_1\alpha_0 = \beta_0\alpha_1$. Now, since $\alpha^2 = id = \beta^2$, we must have that $\alpha_0\alpha_1=1$ and $\beta_0\beta_1 = 1$. Also, by \ref{important}, we must have that $\alpha_1 = \beta_0$ and $\alpha_0 = \beta_1$. Hence, $\alpha_0^2 = -\alpha_0\alpha_1 = -1$. And so $\alpha_0 = \pm i$. Therefore, for $\alpha_0 \in \{-i,i\}$, the options are $\beta_1 = \alpha_0$, $\beta_0 = - \alpha_0$, $\alpha_1 = -\alpha_0$. But note that $$xz \cdot (vu) = (vu - \alpha_0u^2 - \alpha_0v^2 - vu) \quad \textrm{ and } \quad zy \cdot (vu) = (-uv - \alpha_0 u^2 - \alpha_0 v^2 - uv).$$
		This leads to a contradiction, since $xz=yz$. Therefore, there can be no action at all for the case where $p=-1$.
	\end{proof}

	\begin{bibdiv}
		\begin{biblist}
			
			\bib{AlevChamarie}{article}{
				author={Alev, J.},
				author={Chamarie, M.},
				title={D\'erivations et automorphismes de quelques alg\`ebres quantiques},
				language={French},
				journal={Comm. Algebra},
				volume={20},
				date={1992},
				pages={1787--1802},
			}
			
			\bib{allman2009actions}{article}{
				title={Actions of finite dimensional non-commutative, non-cocommutative Hopf algebras on rings},
				author={Allman, J. M.},
				year={2009},
				type = {Master Thesis},
				school={Wake Forest University}
			}
			
			\bib{artamanov}{article}{
				author={Artamonov, V. A.},
				title={Quantum polynomial algebras},
				journal={J. Math. Sci. (New York)},
				volume={87},
				date={1997},
				pages={3441--3462},
			}
			
			\bib{BanicaBichon}{article}{
				author={Banica, T.},
				author={Bichon, J.},
				title={Hopf images and inner faithful representations},
				journal={Glasg. Math. J.},
				volume={52},
				date={2010},
				pages={677--703},
			}
			
			\bib{brown}{book}{
				author={Brown, K. A.},
				author={Goodearl, K. R.},
				title={Lectures on algebraic quantum groups},
				series={Advanced Courses in Mathematics. CRM Barcelona},
				publisher={Birkh\"auser Verlag, Basel},
				date={2002},
				pages={x+348},
			}
			
			\bib{ChanWaltonZhang}{article}{
				author={Chan, K.},
				author={Walton, C.},
				author={Zhang, J.},
				title={Hopf actions and Nakayama automorphisms},
				journal={J. Algebra},
				volume={409},
				date={2014},
				pages={26--53},
			}
			
			\bib{CuadraEtingofWalton}{article}{
				author={Cuadra, J.},
				author={Etingof, P.},
				author={Walton, C.},
				title={Semisimple Hopf actions on Weyl algebras},
				journal={Adv. Math.},
				volume={282},
				date={2015},
				pages={47--55},
			} 
			
			\bib{Davydov}{article}{
				author={Davydov, A.},
				title={Twisted automorphisms of Hopf algebras},
				conference={
					title={Noncommutative structures in mathematics and physics},
				},
				book={
					publisher={K. Vlaam. Acad. Belgie Wet. Kunsten (KVAB), Brussels},
				},
				date={2010},
				pages={103--130},
			}
			
			\bib{etingofwaltonquant}{article}{
				author={Etingof, P.},
				author={Walton, C.},
				title={Finite dimensional Hopf actions on algebraic quantizations},
				journal={Algebra Number Theory},
				volume={10},
				date={2016},
				pages={2287--2310},
			}
			
			\bib{EtingofWalton}{article}{
				AUTHOR = {Etingof, P.},
				author = {Walton, C.},
				TITLE = {Semisimple {H}opf actions on commutative domains},
				JOURNAL = {Adv. Math.},
				FJOURNAL = {Advances in Mathematics},
				VOLUME = {251},
				YEAR = {2014},
				PAGES = {47--61},
			}
			
			\bib{farnsteiner}{book}{
				author = {Farnsteiner, R.},
				title = {Burnside's theorem for Hopf algebras},
				note = {Lecture notes, available at \href{https://www.math.uni-bielefeld.de/~sek/select/rf5.pdf}{https://www.math.uni-bielefeld.de/~sek/select/rf5.pdf}},
			}

			\bib{KacPaljutkin}{article}{
				author={Kac, G. I.},
				author={Paljutkin, V. G.},
				title={Finite ring groups},
				language={Russian},
				journal={Trudy Moskov. Mat. Ob\v s\v c.},
				volume={15},
				date={1966},
				pages={224--261},
			}
			
			\bib{kirkkuzhang}{article}{
				author={Kirkman, E.},
				author={Kuzmanovich, J.},
				author={Zhang, J. J.},
				title={Gorenstein subrings of invariants under Hopf algebra actions},
				journal={J. Algebra},
				volume={322},
				date={2009},
				pages={3640--3669},
			}
			
			\bib{LompPansera}{article}{
				author={Lomp, C.},
				author={Pansera, D.},
				title={A note on a paper by Cuadra, Etingof and Walton},
				journal={Comm. Algebra},
				volume={45},
				date={2017},
				pages={3402--3409},
			}
			
			\bib{masuokah8}{article}{
				AUTHOR = {Masuoka, A.},
				TITLE = {Semisimple {H}opf algebras of dimension {$6,8$}},
				JOURNAL = {Israel J. Math.},
				FJOURNAL = {Israel Journal of Mathematics},
				VOLUME = {92},
				YEAR = {1995},
				NUMBER = {1-3},
				PAGES = {361--373},
			}
			
			\bib{passmanalgebraic}{book}{
				author={Passman, D. S.},
				title={The algebraic structure of group rings},
				series={Pure and Applied Mathematics},
				publisher={Wiley-Interscience [John Wiley \&\ Sons], New
					York-London-Sydney},
				date={1977},
				pages={xiv+720},
			}	
			
		\end{biblist}
	\end{bibdiv}

\end{document}